\theoremstyle{plain}
\newtheorem{lem}{Lemma}
\newtheorem{prop}[lem]{Proposition}
\newtheorem{thm}[lem]{Theorem}
\newtheorem{cor}[lem]{Corollary}
\newtheorem*{thmA}{Main Theorem}
\theoremstyle{definition}
\newtheorem*{defn*}{Definition}
\newtheorem*{ex*}{Example}
\newtheorem{rem}[lem]{Remark}
\newtheorem*{rem*}{Remark}
\theoremstyle{remark}
\DeclareMathOperator{\dist}{dist}
\DeclareMathOperator{\ext}{ext}
\DeclareMathOperator{\inter}{int}
\DeclareMathOperator{\wind}{wind}
\DeclareMathOperator{\Id}{Id}
\DeclareMathOperator{\Fix}{Fix}
\newcommand{\C}{\mathbb C}
\newcommand{\D}{\mathbb D}
\newcommand{\chat}{\widehat{\C}}
\newcommand{\clC}{\widehat \C}
\newcommand{\N}{\mathbb N}
\renewcommand{\SS}{\mathcal S}
\newcommand{\bd}{\partial}
\newcommand{\Omegat}{\widetilde{\Omega}}
\newcommand{\fat}{\mathcal{F}}
\newcommand{\jul}{\mathcal{J}}
\begin{document}

\title[Connectivity of Julia sets 
of Newton maps: A unified approach]{Connectivity of Julia sets 
of Newton maps:\\ A unified approach}

\date{\today}

\author{Krzysztof Bara\'nski}
\address{Institute of Mathematics, University of Warsaw,
ul.~Banacha~2, 02-097 Warszawa, Poland}
\email{baranski@mimuw.edu.pl}

\author{N\'uria Fagella}
\address{Departament de Matem\`atica Aplicada i An\`alisi,
Universitat de Barcelona, 08007 Barce\-lona, Spain}
\email{fagella@maia.ub.es}

\author{Xavier Jarque}
\address{Departament de Matem\`atica Aplicada i An\`alisi,
Universitat de Barcelona, 08007 Barce\-lona, Spain}
\email{xavier.jarque@ub.edu}

\author{Bogus{\l}awa Karpi\'nska}
\address{Faculty of Mathematics and Information Science, Warsaw
University of Technology, ul.~Ko\-szy\-ko\-wa~75, 00-662 Warszawa, Poland}
\email{bkarpin@mini.pw.edu.pl}

\thanks{Supported by the Polish NCN grant decision DEC-2012/06/M/ST1/00168. The second and third authors were partially supported by the Catalan grant 2009SGR-792, and by the Spanish grant MTM2011-26995-C02-02.}
\subjclass[2010]{Primary 30D05, 37F10, 30D30. Secondary 30F20, 30F45}

\bibliographystyle{amsalpha}

\begin{abstract}


In this paper we give a unified proof of the fact that the Julia set of Newton's method applied to a holomorphic function of the complex plane (a polynomial of degree large than $1$ or an entire transcendental function) is connected. The result was recently completed by the authors' previous work, as a consequence of a more general theorem whose proof  spreads among many papers, which consider separately a number of particular cases for rational and transcendental maps, and use a variety of techniques. In this note we present a unified, direct and reasonably self-contained proof which works for all situations alike.
\end{abstract}

\maketitle

\section{Introduction}\label{sec:intro}

Newton's method is one of the oldest and best known root-finding algorithms. It is also the motivation which inspired  the modern approach to  holomorphic dynamics, when the local study turned out to be insufficient for a good understanding of the method applied to complex polynomials.

The global dynamics of Newton's method applied to complex quadratic polynomials is always conjugate to the dynamics of $z\mapsto z^2$, as already noticed in the early works of E.~Schr\"oder and A.~Cayley \cite{cayley1,cayley2,cayley3,schr1,schr2}. They also  observed that this trivial situation is no longer true when Newton's method is applied to higher degree polynomials, where the boundaries between different basins of attraction of attracting fixed points (known nowadays as the Julia set) have, in general, rich and intricate topology. 

A good understanding of the topology of the Julia set of Newton's method, applied to either polynomials or entire transcendental functions, is  interesting not only from the point of view of holomorphic dynamics but it also has interesting numerical applications \cite{hubbardschleicher}. One of the questions which has attracted much attention over many years is whether the stable components of the method, including for example the basins of attraction of the attracting fixed points, are simply connected. We know now that the answer is affirmative as a corollary of a more general theorem, whose proof spreads over the papers \cite{shishikura, FJT1,FJT2,berter} and \cite{bfjk}. The  proofs used various topological and analytical techniques, including quasiconformal geometry. 

Our goal in this paper is to give a direct and unified proof of the connectivity of the Julia set of Newton's method, or equivalently,  of the simple connectivity of each of its stable components. Our proof is inspired by the new approach  introduced in  \cite{bfjk}, which included  the development and applications of  fixed point theorems. These techniques are now expanded and refined so that they fullfil the new goals.  We now proceed to describe our objectives in more detail. 

Let $g:\C  \to \C$ be a polynomial of degree $d\geq 2$ or an entire transcendental map, i.e. a holomorphic map on $\C$ with an essential singularity at infinity. Its \emph{Newton's method} (called also the \emph{Newton map} corresponding to $g$) is defined as
$$
N=N_g:=\Id - \frac{g}{g^{\prime}}.
$$
It is well known that the finite fixed points of $N$ are, exactly, the zeroes of $g$. Moreover, all of them are attracting (the derivative of $N$ has modulus smaller than $1$ at these points). In fact, if the corresponding root of $g$ is simple, then the fixed point of $N$ is superattracting (the derivative of $N$ vanishes). 
 
If $g$ is a polynomial of degree $d\geq 2$, then $N$ is a rational map, and hence it is holomorphic on the Riemann sphere $\clC$. It is easy to check that in this case the point at infinity is a repelling fixed point of $N$. If $N$ is the Newton map of an entire transcendental function $g$, then $N$ is meromorphic transcendental, with infinity being an essential singularity, except for the case $g(z)=P(z)\exp(Q(z))$ with $P$ and $Q$ polynomials, when $N$ is rational. (In this very special case, the point at infinity is a parabolic fixed point of $N$ with derivative $1$.) In both cases, all finite fixed points of $N$ are attracting.

\begin{figure}[htp!] \label{fig:newton}
\centering
\includegraphics[width=0.48\textwidth]{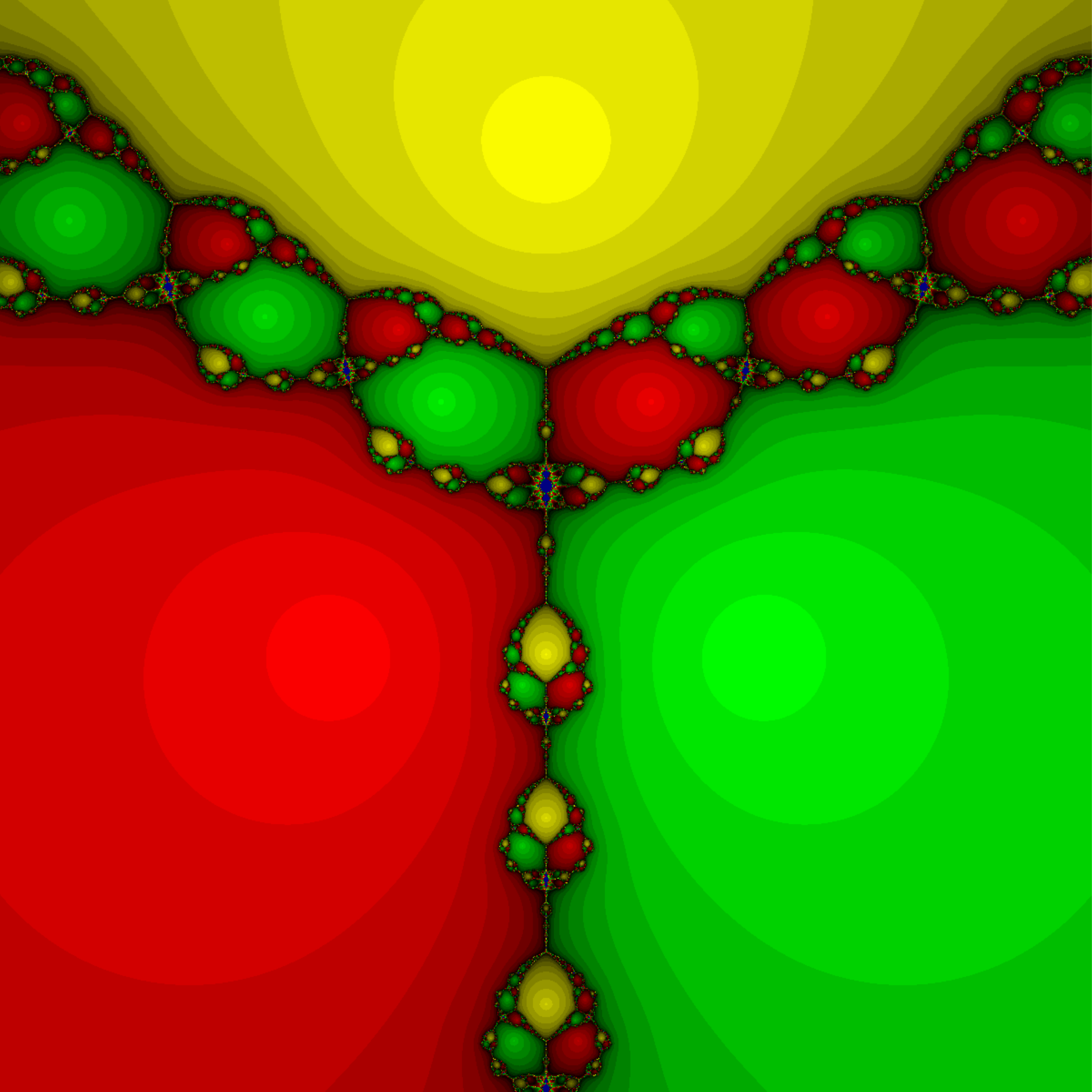} \hfill
\includegraphics[width=0.48\textwidth]{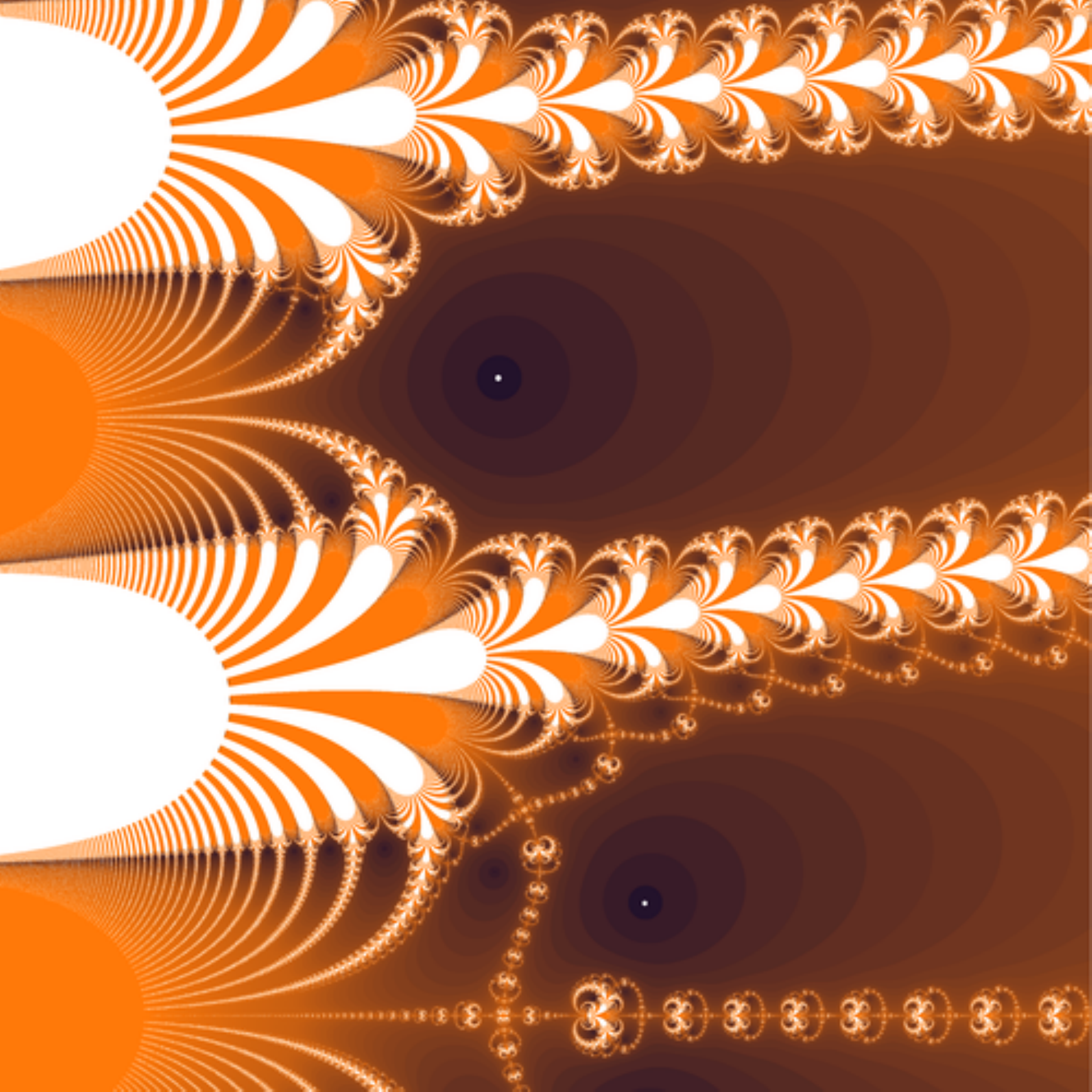} 

\caption{Dynamical planes of Newton's methods for a polynomial (left) and entire transcendental map (right).}
\end{figure}

Let $f:\C \to \chat$ be a meromorphic map, rational or transcendental, as for example the Newton's map $N$ of a polynomial of degree larger than $1$ or a transcendental entire map. 
We consider the dynamical system given by the iterates of $f$, which induces  a dynamical partition of the Riemann sphere into two completely invariant sets: the {\em Fatou set} $\fat(f)$, which is the set of points $z\in\chat$, where the family of iterates $\{f^k\}_{k\geq 0}$ is defined and normal in some neighborhood of $z$, and its complement, the {\em Julia set} $\jul(f) = \chat \setminus \fat(f)$. The Fatou set is open and consists of points with, in some sense, stable dynamics, while the Julia set is closed and its points exhibit chaotic behavior. For general background on the dynamics of rational and meromorphic maps we refer to, for example, \cite{bergweiler,carlesongamelin} or \cite{milnor}. Note that for all Newton's maps $N$, the point at infinity is contained in the Julia set of $N$.

Connected components of the Fatou set, known as {\em Fatou components}, are mapped by $f$ among themselves. A Fatou component $U$ is {\em periodic} of (minimal) period $p \ge 1$, or {\em $p$-periodic}, if $f^p(U) \subset U$. For $p=1$ such a component is named \emph{invariant}. A component which is not eventually periodic (under the iteration of $f$) is called {\em wandering} (these do not exist in the rational case \cite{sullivan}). There is a complete classification of periodic Fatou components: such a component can either be a {\em rotation domain} (a Siegel disc or a Herman ring), the {\em basin of attraction} of an attracting or parabolic periodic point or a {\em Baker domain}, although  this last possibility occurs only  if the map is transcendental. Recall that a $p$-periodic Fatou component $U\subset \C$ is a Baker domain, if $f^{pk}$ on $U$ tend to a point $\zeta$ in the boundary of $U$ as $k \to \infty$, and $f^j(\zeta)$ is not defined for some $j \in\{0,\ldots p-1\}$. This implies the existence of an unbounded Fatou component $U'$ in the same cycle, such that 
$f^{pk}\to \infty$ on $U'$. 

As already mentioned, the question of the connectivity of the Julia set of meromorphic Newton's maps has been widely considered in the literature.  Note that, since the Julia set is compact in $\chat$,  its connectivity is equivalent to the simple connectivity of all Fatou components. 
The first  results are due to F.~Przytycki \cite{przytycki} and Tan Lei \cite{tanlei} on rational Newton's maps. A complete answer for Newton maps for polynomials was given by M.~Shishikura \cite{shishikura} via a more general theorem. More precisely, by means of quasiconformal surgery, he proved
 that  every rational map with less than two weakly repelling fixed points (i.e.~fixed points $z_0$  such that $|f'(z_0)|>1$ or $f'(z_0)=1$), as it is the case of rational Newton maps, has a connected Julia set. 

The extension of this remarkable result to transcendental Newton maps turned out to be not easy. Shishikura's techniques which were based on the pullback of invariant absorbing sets under $N$,   encountered technical difficulties in the transcendental setting due to the presence of the essential singularity at infinity. Nevertheless, with some extra tools, the strategy worked for all Fatou components except for Baker domains \cite{berter,FJT1,FJT2}. The case of Baker domains required a  new approach which was recently developed by the authors in \cite{bfjk}.  Prior to the actual proof,  the existence of  nice enough absorbing domains was shown for this type of Fatou components. Once this was settled, an alternative strategy to Shishikura's pullback construction was presented, providing the existence of weakly repelling fixed points and therefore completing the proof  by contradiction. 

In this paper we want to apply these new ideas to the whole range of possible Fatou components, not only for Baker domains. Our aim is not to reprove the general result of \cite{shishikura, FJT1,FJT2,berter,bfjk}, but to restrict to Newton's method and give a unified proof of the connectivity of its Julia set,  using a common and simpler strategy for both, the rational and the transcendental case. Therefore our goal is to prove the following theorem. 

\begin{thmA}\label{theorem:A}
Let $g$ be a holomorphic function on the complex plane of degree larger than $1$ $($polynomial or entire transcendental$)$ and let $N_g$ be its Newton's method. Then the Julia set of $N_g$ is a connected subset of $\clC$ or, equivalently, every Fatou component of $N_g$ is a simply connected subset of $\mathbb C$.
\end{thmA}

We shall use  auxilliary results which provide the existence of fixed points assuming certain relative positions of sets and their respective images. Some of these results (or slight variations therein) were developed in \cite{bfjk} but others are new,  based on the argument principle and homotopies. In all cases we argue by contradiction, using on the one hand that all finite fixed points of Newton's method are attracting and on the other hand that their basins of attraction are unbounded. 

The paper is organized as follows. In Section~\ref{sec:back} we state the tools and preliminary results used in the proof. Some of these results were previously developed in \cite{bfjk}, nevertheless we include the proofs of the main lemmas  to make the paper self-contained. The proof of the Main Theorem is contained in Section~\ref{section:proofA}.

\section*{Acknowledgments}
We are grateful Mitsuhiro Shishikura for his motivating encouragement to write this unfied proof. We wish to thank the Institut de Matem\`atica de la Universitat de Barcelona (IMUB), the Mathematical Institute of the Polish Academy of Sciences, the University of Warsaw and the Technical University of Warsaw for their hospitality. We also thank Jordi Canela and Antonio Garijo for useful discussions. 

\section{Background and preliminary results}\label{sec:back}

\subsection{Fatou components of meromorphic maps} 
From the beginning of the 20th century, it is well known that basins of attraction of attracting or parabolic cycles possess simply connected \emph{absorbing sets}. Indeed, if $U$ is the basin of a (super)attracting cycle, there exists a neighborhood of the periodic orbit which is invariant under the map and which eventually captures the orbit of every point in $U$. A set with similar properties can also be constructed for the basin of a parabolic cycle (see Remark \ref{rem:scabsorbing}).  

In the realm of transcendental dynamics there appear periodic Fatou components of an additional type, namely Baker domains. These are sometimes called parabolic domains at infinity because they reflect the dynamics of parabolic basins with the parabolic cycle containing infinity. Nevertheless, the fact that the essential singularity is part of the (virtual) cycle introduces significant differences in the study of the dynamics. Although the Fatou theory for parabolic cycles does not apply here, in some cases one can achieve a reasonable understanding of the dynamics near infinity (see e.g.~\cite{barfag,faghen,rippon,bakerexamples}). In particular, the existence of absorbing sets (not necessarily simply connected) inside Baker domains was recently established in \cite{bfjk}. We state this result below, in an appropriate form to be applied in the proof of the Main Theorem. 

%

\begin{thm}[\bf Existence of absorbing regions in Baker domains] 
\label{absorbing}
Let $f: \C \to \clC$ be a transcendental meromorphic map and let $U$ be a periodic Baker domain of period $p$ such that $f^{pn} \to \infty$ as $n \to \infty$. Set $F:=f^p$. Then there exists a domain $W \subset U$ with the following properties:
\begin{enumerate}[\rm (a)]
\item $\overline{W} \subset U$,
\item $F^n(\overline{W}) =
\overline{F^n(W)} \subset F^{n-1}(W)$ for every $n\geq 1$,
\item $\bigcap_{n=0}^\infty F^n(\overline{W})
= \emptyset$,
\item $W$ is absorbing in $U$ for $F$, i.e.~for every compact set $K\in U$, there exists $n_0\in\N$ such that $F^n(K)\subset W$ for all $n\geq n_0$.
\end{enumerate}
Moreover, $F$ is locally univalent on $W$.
\end{thm}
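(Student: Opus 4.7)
The plan is to pass to the universal cover of $U$, classify the lifted dynamics via the Denjoy--Wolff and Cowen theorems, and build $W$ as the projection of a widening fundamental region from the resulting linear model.

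First I would lift the dynamics. Let $\pi\colon\D\to U$ be a universal covering and $\tilde F\colon\D\to\D$ a holomorphic lift of $F$ with $\pi\circ\tilde F=F\circ\pi$. Since $F^n\to\infty$ on $U$ and $\infty\notin U$, the map $F$ has no fixed point in $U$, hence $\tilde F$ has none in $\D$ either: a fixed point $w_0$ of $\tilde F$ would project to a fixed point $\pi(w_0)\in U$ of $F$, contradicting $F^n(\pi(w_0))\to\infty$. By the Denjoy--Wolff theorem $\tilde F^n$ converges locally uniformly on $\D$ to a boundary point; Cowen's theorem then furnishes a simply connected forward invariant absorbing set $V\subset\D$, a model domain $\Omega\in\{\HH,\C\}$, a holomorphic map $\varphi\colon\D\to\Omega$ univalent on $V$, and a M\"obius transformation $T\colon\Omega\to\Omega$ such that $\varphi\circ\tilde F=T\circ\varphi$ on $\D$, $\varphi(V)$ is absorbing for $T$, and $T$ has one of the three normal forms $T(z)=z+1$ on $\HH$ (doubly parabolic), $T(z)=z+1$ on $\C$ (simply parabolic), or $T(z)=\lambda z$ with $\lambda>1$ on $\HH$ (hyperbolic).

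Next I would construct in the model a fundamental region $\Omega_0\subset\varphi(V)$ whose boundary opens in the attracting direction of $T$. In the translation cases $\Omega_0$ can be chosen as a horizontal half-strip whose transverse width grows slowly with $\Re z$ (kept inside $\HH$ in the doubly parabolic case); in the hyperbolic case it is an angular sector around the attracting ray of $T$ whose angular opening grows with $\log|z|$. A direct check then shows $\overline{T(\Omega_0)}\subset\Omega_0$, that $\Omega_0$ absorbs every compact subset of $\varphi(V)$ under iteration of $T$, and that $\bigcap_{n\ge 0}T^n(\overline{\Omega_0})=\emptyset$ since $T$-orbits leave every compact subset of $\Omega$. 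Setting $\tilde W:=\varphi^{-1}(\Omega_0)\cap V$ and $W:=\pi(\tilde W)$, properties (b)--(d) for $F$ on $W$ transport from the analogous statements for $T$ on $\Omega_0$ via the two commutative relations $\varphi\circ\tilde F=T\circ\varphi$ and $\pi\circ\tilde F=F\circ\pi$; the local univalence of $F$ on $W$ follows from univalence of $T$ combined with local injectivity of $\pi$ and of $\varphi|_V$.

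The main obstacle I expect is property (a), namely $\overline W\subset U$. This requires calibrating the widening rate of $\partial\Omega_0$ so that $\tilde W$ sits at uniformly positive hyperbolic distance from $\partial V$ inside $\D$, whence $\pi(\tilde W)$ has closure in $\C$ avoiding $\partial U$. The doubly parabolic case is the most delicate: there $T$-orbits approach the boundary fixed point $\infty$ tangentially to $\partial\HH$, and the growth rate of the half-strip has to be tuned against the distortion of $\varphi^{-1}$ near the Denjoy--Wolff point in order to keep $\tilde W$ bounded away from $\partial\D$ and the projection $W$ away from $\partial U$.
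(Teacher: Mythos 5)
Your reduction to the Cowen model is fine as far as it goes: the lifting, the absence of fixed points of $\tilde F$, Denjoy--Wolff, Cowen's normal forms, the transport of the absorbing property (d) (via compact lifts of compact sets) and of local univalence are all correct. The genuine gap is exactly where you locate ``the main obstacle'', and it affects (b) and (c) as much as (a), while the patch you propose does not close it. Properties (a)--(c) are statements about Euclidean closures in $\C$, whereas everything you build lives in the conformal world of $\pi$ and $\varphi$. The relations $\pi\circ\tilde F=F\circ\pi$ and $\varphi\circ\tilde F=T\circ\varphi$ transport dynamics, not closures: $\pi$ and $(\varphi|_V)^{-1}$ are not proper, so $\overline{\pi(\tilde W)}$ is not $\pi(\overline{\tilde W}\cap\D)$; since $\tilde W$ is not invariant under the deck group (and in the application in this paper $U$ is assumed multiply connected, so the deck group is nontrivial), the limit points of $W=\pi(\tilde W)$ inside $U$ are governed by the closure of the full orbit $\bigcup_g g(\tilde W)$, while limit points on $\partial U$ are invisible in the model; moreover $F^n(\overline W)=\overline{F^n(W)}$ needs a separate argument because $\overline W$ is not compact. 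Hence ``properties (b)--(d) transport'' is unjustified for (b) and (c). As for (a), keeping $\tilde W$ at positive hyperbolic distance from $\partial V$ cannot suffice: $\tilde W$ necessarily contains the tail of every $\tilde F$-orbit, hence accumulates at the Denjoy--Wolff point of $\partial\D$, and hyperbolic data carry no information about the Euclidean distance of $\pi(\tilde W)$ to $\partial U$. A half-plane computation already shows what is at stake: for $U=\HH$ and $z_n\to\infty$ along the positive real axis, the hyperbolic disc of radius $r_n$ about $z_n$ lies at Euclidean distance $|z_n|e^{-r_n}$ from $\partial U$, so whether a ``widening'' region stays away from $\partial U$ depends on calibrating the widening against the Euclidean escape rate of the orbit and $\dist(z_n,\partial U)$ --- data which the abstract pair $(\Omega,T)$ simply does not contain.

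Note also that this paper does not prove Theorem~\ref{absorbing}; it quotes it from \cite{bfjk}, and the proof there avoids the universal covering at precisely this point. Roughly, $W$ is constructed intrinsically in $U$ as an increasing union of hyperbolic discs $B_U(F^n(z_0),r_n)$ centered along an orbit, with $r_n\nearrow\infty$ chosen slowly; forward invariance follows from the Schwarz--Pick inequality $\rho_U(F(z),z_{n+1})\le\rho_U(z,z_n)$, and properties (a)--(c) follow from quantitative estimates relating the hyperbolic metric of $U$ to Euclidean distances near infinity. The projection of Cowen's fundamental set is indeed absorbing in the sense of (d), but in general it need not satisfy $\overline W\subset U$ nor the nesting in (b); to rescue your plan you would have to supply exactly this kind of plane estimate for the projected set, at which point the argument essentially becomes the intrinsic construction of \cite{bfjk}.
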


\begin{rem}
\label{rem:scabsorbing}
If $U$ is the basin of a (super)attracting $p$-periodic point $\zeta$, then $F = f^p$ is conformally conjugate to $z\mapsto F'(\zeta) z$ (if $F'(\zeta)\neq 0$) or $z\mapsto z^k$ for some integer $k\ge 2$ (if $F'(\zeta)= 0$) near $z=0$. In this case, if we take $W$ to be the preimage of a small disc centered at $z=0$ under the conjugating map, then $W$ is a simply connected absorbing domain for $F$ and $\bigcap_{n\geq 0} F^n(\overline{W}) = \{\zeta\}$. Similarly if $U$ is a basin of a parabolic $p$-periodic point, it has a simply connected absorbing domain in  an attracting petal in $U$. 
\end{rem}

The following result appears as part of \cite{mayer}. We include here a direct proof for completeness using ideas from \cite{przytycki} (see also \cite{fagstandard,rempe,deniz}).

\begin{prop}[\bf Unboundedness of Newton's basins] 
\label{prop:unbounded_attracting_basins}
Let $N$ be a meromorphic Newton's map (rational or transcendental) and let $U$ be the immediate basin of attraction of an attracting fixed point. Then $U$ is unbounded.
\end{prop}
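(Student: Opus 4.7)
The plan is to argue by contradiction. Suppose $U$ is bounded. Then $\overline{U}$ is a compact subset of $\C$, and several preliminary facts will be used throughout: $N$ is holomorphic on $U$ (any pole of $N$ lies in $\jul(N)$ and hence outside the Fatou component $U$); the map $N$ has $\zeta$ as its only fixed point in $\overline{U}$, since the remaining finite fixed points of $N$ are the other zeros of $g$, each lying in a disjoint immediate basin, and $\infty\notin\overline{U}$ by boundedness; and $g$ is nowhere zero on the compact set $\partial U\subset\jul(N)$, so $|g|\geq\delta>0$ there for some $\delta$.

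The strategy is to exploit the Newton-specific identity $N(z)-z=-g(z)/g'(z)$ together with a winding-number argument. On $U$, the holomorphic function $N-\Id$ has a single simple zero at $\zeta$ and no poles (the order of the zero at $\zeta$ is $1$ regardless of the multiplicity of $\zeta$ as a root of $g$, since $g/g'\sim (z-\zeta)/m$ locally), so by the argument principle
\[
\wind\bigl((N-\Id)\circ\gamma,\,0\bigr)=1
\]
for every Jordan curve $\gamma\subset U$ that winds once around $\zeta$. To derive the contradiction, I would produce such a curve $\gamma$ lying arbitrarily close to $\partial U$, using the simply connected absorbing neighborhood $W\subset U$ of $\zeta$ supplied by Remark~\ref{rem:scabsorbing} together with its nested pre-images $W_n:=N^{-n}(W)\cap U$, which exhaust $U$; for large $n$, the component of $\partial W_n$ surrounding $\zeta$ lies in a thin neighborhood of $\partial U$. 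The plan is then to analyze the displacement vectors $-g(z)/g'(z)$ along such a curve---using the lower bound $|g|\geq\delta$ near $\partial U$, the uniform bound $|z-N(z)|\leq\diam(U)$ on $\overline{U}$, and the attracting character of $\zeta$---to show that the displacement direction cannot accumulate a full turn around $0$, forcing the winding to be $0$ and contradicting the value $1$ above.

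The hard part will be making this boundary analysis rigorous. Since $\partial U$ may be fractal and can contain poles of $N$ (zeros of $g'$ where $g\neq 0$), the function $-g/g'$ is not well-behaved on $\partial U$ directly; this is precisely why I would work on the smoother curves $\partial W_n$, which are locally univalent pull-backs of $\partial W$ under iterates of $N$ and therefore avoid the poles. A conceptually cleaner route, in the spirit of~\cite{bfjk}, would be to recast the contradiction as a fixed-point statement: apply a fixed-point theorem on $\overline{W_n}$ (or directly on $\overline{U}$, where $N$ is continuous away from finitely many poles on $\partial U$) to produce a \emph{second} fixed point of $N$ in $\overline{U}$ beyond $\zeta$. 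This is impossible, since the fixed points of $N$ in $\C$ are exactly the zeros of $g$, confined to pairwise disjoint immediate basins, and $\infty\notin\overline{U}$. Either way, the crux is the geometric control of $-g/g'$ along curves approaching $\partial U$, which is the step I would expect to require the most care.
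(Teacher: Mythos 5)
There is a genuine gap: the step that is supposed to produce the contradiction is missing, and the heuristic you offer for it cannot work. By the argument principle (your own first display), for a Jordan curve $\gamma\subset U$ around $\zeta$ the winding number of $(N-\Id)\circ\gamma$ about $0$ equals the number of fixed points minus the number of poles of $N$ enclosed by $\gamma$; if $\gamma$ encloses only $\zeta$ this is $1$, and by homotopy invariance it remains $1$ for every curve in $U$ freely homotopic to a small circle around $\zeta$ in the complement of the fixed points and poles. So there is nothing to contradict: boundedness of $U$ does not force the winding to be $0$. Your proposed mechanism --- that the direction of the displacement $-g/g'$ ``cannot accumulate a full turn'' near $\partial U$ --- is unsupported and in fact false in general: the bounds $|g|\ge\delta$ on $\partial U$ and $|z-N(z)|\le\diam(U)$ control only the \emph{modulus} of the displacement, not its argument, and near $\zeta$ the displacement $(N'(\zeta)-1)(z-\zeta)+o(z-\zeta)$ already makes exactly one full turn. (A further issue: since simple connectivity of $U$ is exactly what this proposition is used to prove, you cannot assume it; a Jordan curve in $U$ around $\zeta$ may enclose poles and other fixed points, so even the value $1$ is not justified as stated.) The fallback route via ``a fixed-point theorem on $\overline{W_n}$ or $\overline{U}$'' is likewise not executed: the relevant results (e.g.\ Proposition~\ref{prop:index}, Lemma~\ref{mapout}) require $N(\partial\Omega)\subset\Omega$ or $\overline\Omega$ contained in a complementary component of $N(\partial\Omega)$, and neither holds for $\Omega=U$, since $N(\partial U)\subset\jul(N)$ meets neither $U$ nor its exterior in a controlled way.

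For comparison, the paper's proof is of a completely different nature and does not use winding numbers at all. One picks $z_0,z_1\in U\setminus\{\zeta\}$ with $N(z_1)=z_0$, joins them by an arc $\gamma_0$ avoiding the postcritical set in $U$, and pulls $\gamma_0$ back by the inverse branches $h^n$ inside $U$. Bounded distortion of $h^n$ on a neighborhood of $\gamma_0$, together with the fact that $\{N^{n}\}$ cannot be normal near a limit point of $\bigcup_n\gamma_n$ (which lies in $\jul(N)$), forces $\diam\gamma_n\to 0$, hence $|z_n-N(z_n)|\to 0$; a limit point is then a finite fixed point on $\partial U\subset\jul(N)$, impossible because all finite fixed points of a Newton map are attracting. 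If you want to salvage your approach you would need a genuinely new input to evaluate the winding number near $\partial U$; as written, the argument does not close.
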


\begin{proof}
Assume that $U$ is an immediate bounded basin of attraction of an attracting fixed point  $\zeta$. Since $U$ is bounded, it contains finitely many critical points and all of them are attracted to $\zeta$. Hence we can choose two distinct points $z_0, z_1 \in U\setminus \{\zeta\}$  such that $N(z_1)=z_0$, and they can be joined by a curve $\gamma_0\subset U\setminus\overline{\bigcup_{n\geq 1}N^n({\rm Crit} \cap U)}$, where ${\rm Crit}$ denotes the set of critical points of $N$. 

Denote by $h$ the local branch of $N^{-1}$ mapping  $z_0$ to $z_1$. This branch can be extended along $\gamma_0$ unless $\gamma_0$ contains an asymtpotic value whose asymptotic path is contained in $U$, contradicting the boundness of $U$. Repeating the argument, we can define inductively
$\gamma_n=h(\gamma_{n-1})$ where now $h$ denotes  the extension of the initial branch along the curve $\bigcup_{j=0}^{n-1}\gamma_j$. Set $\gamma=\bigcup_{n=0}^\infty\gamma_n.$

Observe that  there exists a neighborhood $V$ of $\gamma_0$ such that the distortion of $h^n$ is bounded on $V$, independently of $n$. This implies that the diameter of $\gamma_n$ tends to 0. Indeed, otherwise there exists a subsequence $n_j$ and a  nonempty  open set $V' \subset \cap_{n=1}^\infty h^{n_j}(V)$ containing a limit point of $\gamma$, so the family $N^{n_j}$ is normal on $V'$. But this is not possible since any limit point of $\gamma$ is in the Julia set.

Hence $|z_n-z_{n+1}|=|z_n - N(z_n)|  \to 0$ and therefore there is  a finite fixed point in $\partial U$, which is a contradiction.

\end{proof}

\subsection{Images of curves and existence of fixed points}
The notation and the results in this section will be used repeatedly in the proof of the Main Theorem. 

For a compact set $X \subset \C$ we denote by $\ext(X)$ the connected component of $\clC \setminus X$ containing infinity.  We set $K(X) = \clC \setminus \ext(X)$ and notice that $K(X)$ is closed and bounded. 
If $f$ is a holomorphic map with no poles in a neighborhood of $K(X)$, then by the Maximum Principle, $f(K(X))=K(f(X))$. For a Jordan curve $\gamma \subset \C$ we denote by $\inter(\gamma)$ the bounded component of $\C \setminus \gamma$.

The first result in this sequel establishes the existence of poles in some bounded component of the complement of a multiply connected Fatou component. This will be the starting point in most of our future arguments.  
 
\begin{lem}[\bf Poles in loops] \label{lem:poles-in-holes} 
Let $f:\C \to \clC$ be a meromorphic transcendental map or a rational map for which infinity belongs to the Julia set. Let $\gamma \subset \C$ be a
closed curve in a Fatou component $U$ of $f$, such that $K(\gamma) \cap \jul(f) \ne
\emptyset$. Then there exists $n \geq 0$, such that $K(f^n(\gamma))$ contains
a pole of $f$. Consequently, if $U$ is multiply connected then there exists a bounded component of $\clC \setminus f^n(U)$, which contains a pole.
\end{lem}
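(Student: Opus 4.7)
The plan is to argue by contradiction: assume that $K(f^n(\gamma))$ contains no pole of $f$ for every $n\ge 0$. A straightforward induction on $n$, using the fact recalled just above the lemma (that $f(K(X))=K(f(X))$ whenever $f$ is holomorphic near $K(X)$), will show that each $f^n$ extends holomorphically to a neighborhood of $K(\gamma)$ with $f^n(K(\gamma))=K(f^n(\gamma))\subset\C$. I will then pick any $z_0\in K(\gamma)\cap\jul(f)$. Since $\gamma\subset U\subset\fat(f)$, the point $z_0$ cannot lie on $\gamma$, so it will belong to some bounded component $D$ of $\C\setminus\gamma$ with $\overline D\subset K(\gamma)$.

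\textbf{Montel and the contradiction.} The iterates $\{f^n\}_{n\ge 0}$ will be holomorphic on $D$ but cannot form a normal family there, since $z_0\in D\cap\jul(f)$. Montel's theorem will then force the set
\[
E \;:=\; \chat\setminus\bigcup_{n\ge 0}f^n(D)
\]
to contain at most two points; and since $\bigcup_n f^n(D)\subset\C$ by the previous step, $\infty\in E$, so $E=\{\infty\}$ or $E=\{\infty,a\}$ for some $a\in\C$. To reach a contradiction it will suffice to produce a pole $p\ne a$ of $f$: for then there are $m\ge 0$ and $z_\ast\in D$ with $f^m(z_\ast)=p$, giving $p\in f^m(D)\subset K(f^m(\gamma))$ and contradicting the starting assumption. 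The existence of such a pole will follow from the backward invariance of $E$ under $f$ (its complement being forward invariant): every pole of $f$ lies in $E\cap\C\subset\{a\}$, and a ramification count at $a$, together with $f(\infty)\ne a$ (true for rational $f$ with $\infty\in\jul(f)$, and no constraint at all in the transcendental case), will rule out $a$ being a pole; for the transcendental meromorphic setting one can alternatively invoke the fact that the Newton maps under consideration have infinitely many poles.

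\textbf{The multiply connected case.} For the second statement, if $U$ is multiply connected I will choose $\gamma\subset U$ to be a Jordan curve enclosing a connected component of $\chat\setminus U$. Since $\partial U\subset\jul(f)$, the boundary of this component will supply a Julia point inside $K(\gamma)$, and the first part of the lemma will yield a pole $p\in K(f^n(\gamma))$ for some $n$. As $p$ is a preimage of $\infty\in\jul(f)$, it lies in $\jul(f)$ and hence outside $f^n(U)\subset\fat(f)$; combined with the inclusion $K(f^n(\gamma))\subset K(f^n(U))$ (which follows from $f^n(\gamma)\subset f^n(U)$ and the elementary fact that $X_1\subset X_2$ implies $K(X_1)\subset K(X_2)$), this will place $p$ in a bounded component of $\chat\setminus f^n(U)$.

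\textbf{Main obstacle.} The delicate step is the last part of the second paragraph: ruling out the extreme scenario in which the single finite value $a$ allowed by Montel absorbs every pole of $f$. This pole-counting / backward-invariance bookkeeping, and the need to quarantine the degenerate rational quadratic case (where the hypothesis $K(\gamma)\cap\jul(f)\ne\emptyset$ is anyway vacuous because all Fatou components of a quadratic rational Newton map are simply connected), is where the main technical care is required.
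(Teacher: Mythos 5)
Your argument is correct in substance but follows a genuinely different route from the paper. The paper's proof is a three-line citation argument: prepoles are dense in the Julia set (for transcendental $f$, and for rational $f$ backward orbits of $\infty\in\jul(f)$ are dense in $\jul(f)$), so $K(\gamma)$ --- which contains an open set meeting $\jul(f)$, since $\gamma\subset\fat(f)$ --- contains a prepole of some minimal order $n$, and pushing forward with the maximum principle puts a pole in $K(f^n(\gamma))$. You instead reprove the relevant density statement from scratch: assuming no iterate of $K(\gamma)$ meets a pole, Montel's theorem applied on a bounded complementary component $D$ containing a Julia point forces $\bigcup_n f^n(D)$ to omit at most one finite value $a$, and a pole different from $a$ yields the contradiction. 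This is the blow-up property of the Julia set argued directly; it buys self-containedness at the cost of the exceptional-value bookkeeping that the paper's citation hides.

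On that bookkeeping: your treatment of the rational case is fine ($f^{-1}(a)\subset\{\infty,a\}$ with $f(a)=\infty$ makes $a$ a classical exceptional point, forcing a superattracting cycle through $\infty$, contradicting $\infty\in\jul(f)$; degree $\ge 2$ is implicitly assumed, as it is in the paper). In the transcendental case, however, your "ramification count" only shows that $a$ would be an omitted value; it does not exclude that $f$ has exactly one pole $a$ which is omitted (e.g.\ $e^z/z$ is such a map), and your fallback that Newton maps have infinitely many poles is not true in general ($N_g$ can even be entire, e.g.\ for $g(z)=e^{e^z}$, where $N_g(z)=z-e^{-z}$). So your argument, pushed to its end, shows: either the conclusion holds, or $f$ is entire-like (at most one pole, omitted). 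This is exactly the implicit hypothesis under which the paper's cited fact "prepoles are dense in $\jul(f)$" is valid, so your proof is no weaker than the paper's --- but you should state the dichotomy honestly rather than appeal to an unproved pole count.
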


\begin{proof}
If $f$ is transcendental, it is well known that prepoles are dense in the Julia set \cite{bergweiler}. If $f$ is rational, preimages of any given point in the Julia set, in particular infinity, are dense in the Julia set.  

Let $\gamma \subset \C$ be a
closed curve in a Fatou component $U$ of $f$, such that $K(\gamma) \cap \jul(f) \ne
\emptyset$. By the observation above,  $K(\gamma)$ contains a prepole  of order, say, $n\geq 0$, where $n$ is the smallest with this property. By the maximum principle, $f^j(K(\gamma))= K(f^j(\gamma))$ for all $j\leq n$ and therefore $K(f^n(\gamma))$ contains a pole of $f$.  Since $f^n(\gamma) \subset f^n(U)$, it follows that if $U$ is multiply connected, the pole belongs to a bounded component of $\clC\setminus f^n(U)$.
\end{proof}

The remaining statements ensure the existence of weakly repelling fixed points under certain hypotheses. Recall that a fixed point $z_0$ of a holomorphic map $f$ is weakly repelling, if $|f'(z_0)| > 1$ or $f'(z_0) = 1$. The main lemmas  rely heavily on the following two theorems due to Buff. 

\begin{thm}[\bf Rational-like maps {\cite[Theorem 2]{buff}}] \label{rat-like}
Let $D$ and $D'$ be domains in $\C$ with finite Euler
characteristic, such that $\overline{D'}
\subset D$ and let $f:D' \to D$ be a proper holomorphic map. Then $f$ has a weakly repelling fixed point in $D'$.
\end{thm}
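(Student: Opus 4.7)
The plan is to argue by contradiction using the holomorphic fixed-point index. Assume $f$ has no weakly repelling fixed point in $D'$, so every fixed point $z_0 \in D'$ has multiplier $\lambda = f'(z_0)$ satisfying $|\lambda|<1$, or $|\lambda|=1$ with $\lambda\ne 1$.

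First I would set up the index computation. By properness and $\overline{D'}\subset D$, the map $f$ extends continuously to $\overline{D'}$ with $f(\partial D')\subset \partial D$, so $z-f(z)$ is nowhere zero on $\partial D'$. Since $\Fix(f)\cap D'$ is contained in the compact set $\overline{D'}$ and consists of isolated points (because $f\not\equiv\Id$ on any component of $D'$, as $\overline{D'}\subsetneq D$), the fixed-point set is finite. The sum of holomorphic fixed-point indices is therefore
\[
\Sigma \;:=\; \sum_{z_0\in\Fix(f)\cap D'}\iota(f,z_0) \;=\; \frac{1}{2\pi i}\oint_{\partial D'}\frac{dz}{z-f(z)},
\]
and for each simple fixed point $\iota(f,z_0) = 1/(1-\lambda)$. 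Under our assumption,
\[
\Re\!\left(\tfrac{1}{1-\lambda}\right) \;=\; \tfrac{1-\Re\lambda}{|1-\lambda|^2} \;>\; 0,
\]
with an analogous positivity statement for multiple non-weakly-repelling fixed points. Hence, provided $\Fix(f)\cap D'\ne\emptyset$, we obtain $\Re\Sigma>0$.

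Next I would compute $\Sigma$ topologically to derive a contradiction. The existence of at least one fixed point is addressed separately, by a Lefschetz-type argument exploiting that $\overline{D'}\subset D$ and that $f$ is proper of some degree $d\geq 1$ (so the proper-degree version of the argument principle applied to $f(z)-z$ along $\partial D'$ yields a positive count). To identify $\Sigma$ itself, one reduces the boundary integral to a winding-number computation on each component of $\partial D'$, using how $f$ maps the boundary components of $D'$ onto those of $D$ with total degree $d$; the answer is a real quantity depending only on $\chi(D)$, $\chi(D')$ and $d$, which in the geometric configuration $\overline{D'}\subset D$ turns out to be non-positive — contradicting $\Re\Sigma>0$.

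The main obstacle is making this topological reduction precise when both $D$ and $D'$ are multiply connected, since the nesting of their boundary components requires careful bookkeeping and one must rule out cancellations. A cleaner alternative, pursued by Buff, is to extend $f$ to a rational map $\tilde f:\chat\to\chat$ of degree at least $2$ by gluing finite Blaschke-like maps on each component of $\chat\setminus\overline{D'}$, arranged so that every fixed point introduced outside $D'$ is attracting or non-parabolic indifferent. Since every rational map on $\chat$ of degree $\geq 2$ has at least one weakly repelling fixed point (a classical consequence of the holomorphic index formula on the sphere), the weakly repelling fixed point of $\tilde f$ must lie inside $D'$, contradicting the hypothesis. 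The delicate step in that route is performing the quasiconformal glueings in a way compatible with the finite connectivities of $D$ and $D'$, but it bypasses the case-by-case topological computation.
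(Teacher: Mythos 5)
First, a point of comparison: the paper does not prove this statement at all. Theorem~\ref{rat-like} is imported verbatim from \cite[Theorem 2]{buff} and used as a black box, so there is no internal proof to measure yours against; the authors' intended ``proof'' is the citation. Judged on its own, your proposal is a plan with a genuine gap rather than a proof. The decisive flaw is in your first route: the sum of holomorphic fixed-point indices $\Sigma=\frac{1}{2\pi i}\oint_{\partial D'}\frac{dz}{z-f(z)}$ is \emph{not} ``a real quantity depending only on $\chi(D)$, $\chi(D')$ and $d$''. Unlike the fixed-point \emph{count}, which is a winding number of $f(\gamma)-\gamma$ and hence genuinely topological (this is exactly the content of Lemma~\ref{homolemma} and Proposition~\ref{prop:index} in the paper), the integrand $1/(z-f(z))$ is not a logarithmic derivative, and on a proper subdomain of $\chat$ there is no residue at infinity forcing the total to equal $1$ as in the classical rational-map argument. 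Concretely, a quadratic-like restriction of $z^2+c$ capturing both finite fixed points has $\Sigma=0$, whereas a quadratic-like restriction of a quadratic rational map whose third fixed point $z_3$ lies outside $D$ has $\Sigma=1-\iota(f,z_3)$, which varies continuously with the map while $\chi(D)=\chi(D')=1$ and $d=2$ are unchanged. So the intended contradiction ``$\Re\Sigma>0$ versus $\Sigma$ topologically non-positive'' is unreachable. What your index computation does correctly establish is only the local positivity $\Re\,\iota(f,z_0)\ge 1/2$ at each non-weakly-repelling fixed point (all of which are automatically simple, since a multiple fixed point has multiplier $1$ and is therefore weakly repelling); that alone proves nothing without a global constraint on $\Sigma$, which is precisely what is missing off the sphere.

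Your second route --- extend $f$ to a rational map of degree at least $2$ so that every fixed point created outside $D'$ fails to be weakly repelling, then invoke the classical fact that such a rational map has a weakly repelling fixed point --- is the right strategy in spirit; it is essentially Shishikura's surgery, of which Buff's argument is a refinement designed to avoid exactly the pitfalls of your first route. But as written it is a description, not a proof: everything you defer (performing the quasiconformal gluings compatibly with the finite connectivities of $D$ and $D'$, straightening to obtain a genuine rational map, controlling the multipliers of the new fixed points, and handling the degree-one case, where the extension must add degree or one must instead argue via the hyperbolic contraction of $f^{-1}:D\to D'$) is where the entire content of the theorem lies. If the goal is to use the theorem as the paper does, cite \cite{buff}; if the goal is to prove it, the second route must be executed in detail and the first route discarded.
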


The following is an improved corollary of \cite[Theorem 3]{buff}. 
\begin{thm}[\bf Rational-like maps with boundary contact {\cite[Corollary 2.12]{bfjk}}]\label{cor:boundarycontact}
Let $D$ be a simply connected domain in $\clC$ with locally connected
boundary and $D^{\prime}\subset D$ a domain in $\clC$ with finite Euler
characteristic. Let $f$ be a continuous map on the closure of $D'$ in $\clC$,
meromorphic in $D'$, such that $f:D^{\prime}\to D$ is proper. 
If $\deg f > 1$ and $f$ has no fixed points in $\partial D \cap \partial D'$, or $\deg f = 1$ and $D \ne D^{\prime}$,
then $f$ has a weakly repelling fixed point in $D^{\prime}$. 
\end{thm}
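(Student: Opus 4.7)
This result refines Buff's Theorem~3 in~\cite{buff} (``rational-like maps with boundary contact''), extending the conclusion to the degree~$1$ case and relaxing the regularity of $\partial D$ to mere local connectivity. The plan is to treat the two degree ranges separately, reducing each to an established tool.

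For the main case $\deg f \ge 2$, I would reduce to Buff's Theorem~3 directly. Since $D$ is simply connected with locally connected boundary (and we may assume $D\neq\clC$, as otherwise $f$ is a rational map of degree $\geq 2$ and Theorem~1 of~\cite{buff} applies), Carath\'eodory's theorem yields a Riemann map $\phi\colon\D\to D$ that extends to a continuous surjection $\overline{\D}\to\overline{D}$. Pulling back produces a proper meromorphic map $\tilde f:=\phi^{-1}\circ f\circ\phi\colon\tilde D'\to\D$, with $\tilde D':=\phi^{-1}(D')$, of the same degree as $f$; the fixed points of $\tilde f$ in $\tilde D'$ correspond bijectively to those of $f$ in $D'$ with the same multipliers, since $\phi$ is conformal on $\tilde D'$. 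In this reduced setting $\partial\D$ is a round circle, so Buff's Theorem~3 applies directly to $\tilde f$ (the ``no fixed points on $\partial\tilde D'\cap\partial\D$'' hypothesis transfers via $\phi$), yielding a weakly repelling fixed point of $\tilde f$ in $\tilde D'$ and hence of $f$ in $D'$.

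For $\deg f = 1$, where $f\colon D'\to D$ is a conformal isomorphism with $D'\subsetneq D$, my plan is to use the Denjoy--Wolff theorem. Set $h:=f^{-1}$; since $h(D)=D'\subsetneq D$, the map $h$ is a holomorphic non-automorphism self-map of $D$, so by Denjoy--Wolff its iterates $h^n$ converge locally uniformly on $D$ to a point $p\in\overline{D}$. An easy induction shows $h^n(D)\subseteq D'$ for all $n\ge 1$, so $p\in\overline{D'}$. If $p\in\partial D$, then $p\in\partial D\cap\overline{D'}\subset\partial D\cap\partial D'$, and $p$ would be a boundary fixed point of $f$, contradicting the standing no-fixed-point hypothesis on $\partial D\cap\partial D'$. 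Hence $p\in D$, in which case $h(p)=p$ with $|h'(p)|<1$, so $p=f(p)\in h(D)=D'$ is the desired weakly repelling fixed point of $f$, with $|f'(p)|=1/|h'(p)|>1$.

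The main obstacle lies in the $\deg f\ge 2$ case, specifically in verifying that Buff's Theorem~3 applies after the Carath\'eodory reduction: the pull-back domain $\tilde D'$ inherits finite Euler characteristic from $D'$, and $\partial\D$ has the best possible regularity, but one must check that the boundary behavior of $\tilde f$ across $\partial\tilde D'\cap\partial\D$ is controlled well enough for Buff's argument. The degree $1$ case is simpler, its only subtlety being the ruling out of a Denjoy--Wolff point on $\partial D$, which is where the boundary fixed point hypothesis enters decisively.
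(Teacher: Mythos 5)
The paper does not prove this statement: it is imported verbatim from \cite[Corollary 2.12]{bfjk}, so there is no internal proof to compare against. Your two-pronged strategy --- Carath\'eodory plus Buff's Theorem~3 for $\deg f\ge 2$, and the Denjoy--Wolff theorem applied to $f^{-1}$ for $\deg f=1$ --- is exactly the derivation used in that source, so the overall plan is the right one. Two points deserve attention, one about the statement and one about a step you flag but do not close.

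First, in the degree-one case you rule out a Denjoy--Wolff point $p\in\partial D$ by invoking ``the standing no-fixed-point hypothesis on $\partial D\cap\partial D'$,'' but as the theorem is literally punctuated that hypothesis is attached only to the clause $\deg f>1$; in the clause $\deg f=1$ the only assumption is $D\ne D'$. You are nevertheless right to use it: without it the degree-one claim is false (take $D=\D$, $D'=\{|z-\tfrac12|<\tfrac12\}$ and $f(z)=2z-1$, whose only fixed point is $z=1\in\partial D\cap\partial D'$), and the paper does verify the boundary condition whenever it applies the theorem with $\deg f=1$ (see case (a$''$) of Lemma~\ref{mapout}). So read the hypothesis as applying to both alternatives. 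You should also justify why the Denjoy--Wolff point $p$ of $h=f^{-1}$ is actually a fixed point of the continuous extension of $f$: since $f\circ h=\mathrm{id}_D$, for any $w\in D$ one has $f(h^n(w))=h^{n-1}(w)$, and $h^n(w)\in D'$ converges to $p\in\overline{D'}$, so continuity of $f$ on $\overline{D'}$ gives $f(p)=p$. Second, the obstacle you name in the $\deg f\ge2$ case is real and is the crux of the reduction: $\varphi^{-1}$ is not defined on $\partial D$ (the Carath\'eodory extension of $\varphi$ need not be injective there), so the continuity of $\tilde f=\varphi^{-1}\circ f\circ\varphi$ on $\overline{\tilde D'}$, and the transfer of the ``no fixed points on $\partial\D\cap\partial\tilde D'$'' hypothesis, are not automatic and must be argued (e.g.\ by showing that the proper finite-degree map $\tilde f:\tilde D'\to\D$ itself extends continuously, or by reworking Buff's argument, which only needs radial limits). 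As it stands this step is a genuine gap in the write-up, acknowledged but not filled.
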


We shall also use the following topological result.
\begin{thm}[\bf Torhorst Theorem {\cite[pp. 106, Theorem 2.2]{whyburn}}] \label{tor}
Let $X$ be a  locally connected continuum in $\chat$. Then the boundary of every component of $\chat\setminus X$ is a locally connected continuum.
\end{thm}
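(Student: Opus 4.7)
The assertion is a classical plane-topology result, quoted from Whyburn in the excerpt, so let me sketch a proof strategy. Write $Y := \partial U$, where $U$ is a component of $\clC \setminus X$. Since $Y \subset X$ and $X$ is closed, $Y$ is compact. The plan splits naturally into two parts: showing $Y$ is connected, and then showing it is locally connected.

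For connectedness, I would first observe that $\clC \setminus U$ is a continuum. It decomposes as $X \cup \bigcup_j \overline{U_j}$, where the $U_j$ are the remaining components of $\clC \setminus X$; each $\partial U_j \subset X$, so every $\overline{U_j}$ meets the connected set $X$, and a union of connected sets all meeting a common connected set is connected. Then I would invoke the classical two-dimensional fact that the frontier of a complementary domain of a continuum in $S^2 = \clC$ is itself a continuum (a consequence of Janiszewski-type separation results).

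Local connectedness is the substance of Torhorst's theorem. My approach would use the uniform characterization valid in compact metric continua: a continuum is locally connected iff for every $\epsilon > 0$ it admits a finite cover by subcontinua of diameter less than $\epsilon$. Fix $p \in Y$ and $\epsilon > 0$, and let $W$ be a connected open neighborhood of $p$ in $X$ with $\overline{W} \subset B(p, \epsilon/2)$, obtained from local connectedness of $X$. The key technical input is the finiteness fact that, for every $\eta > 0$, only finitely many components of $\clC \setminus X$ have diameter $\geq \eta$; this is itself a standard consequence of local connectedness of $X$, since otherwise one extracts an accumulating sequence of "large holes" contradicting the existence of a small connected neighborhood at the accumulation point. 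Using this, I would enlarge $\overline{W}$ by adjoining the closures of all components of $\clC \setminus X$, other than $U$, whose closure meets $\overline{W}$ and whose diameter is less than $\epsilon/2$. The resulting set $\widetilde{W}$ is a subcontinuum of $\clC$ of diameter less than $\epsilon$, and I would argue that $\widetilde{W} \cap Y$ is a connected neighborhood of $p$ in $Y$, via a planar separation argument exploiting that $p \in \partial U$ specifically, so that $U$ plays the role of a hole locally bounded by $Y$.

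The main obstacle is precisely this last step: showing that $\widetilde{W} \cap Y$ is connected, not merely of small diameter. The planar combinatorics of how $Y$ threads through the various small and large complementary components of $\clC \setminus X$ requires careful bookkeeping, and this is where the finiteness of large holes is essential: without it one could imagine an accumulating "comb" of $Y$-points near $p$ that cannot be gathered into a single small connected piece. Once the finite cover of $Y$ by subcontinua of arbitrarily small diameter is established, the uniform characterization immediately yields local connectedness of $Y$.
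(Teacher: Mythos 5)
The paper gives no proof of this statement: it is quoted verbatim as a classical result from Whyburn's \emph{Analytic Topology} and used as a black box. So there is no argument in the paper to compare yours against; what follows is an assessment of the sketch on its own terms.

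Your treatment of connectedness is sound, though it takes a small detour. You show $\clC\setminus U$ is a continuum and then invoke the fact that the boundary of a complementary domain of a continuum in $S^2$ is a continuum --- but since $X$ is already a continuum and $U$ is a component of $\clC\setminus X$, you could apply that fact directly without first reassembling $\clC\setminus U$. Either way, this part is correct. The null-sequence lemma (for each $\eta>0$ only finitely many complementary components of a locally connected continuum in $S^2$ have diameter $\ge\eta$) is also a genuine classical fact, and you are right that it is an essential ingredient; you should, however, verify that its usual proof does not itself pass through Torhorst's theorem, since the two are often proved in tandem in the textbook treatments and a circularity would be easy to introduce.

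The genuine gap is exactly the one you flag: you construct a small subcontinuum $\widetilde W$ of $\clC$ containing $p$, but you do not establish that $\widetilde W\cap Y$ is connected, and nothing in the construction forces this. The set $\widetilde W\cap Y$ is a union of $\overline W\cap\partial U$ together with the pieces $\overline{U_j}\cap\partial U$ for the adjoined small holes, and these pieces need not link up inside $\widetilde W\cap Y$; a priori $Y$ could enter and leave $\widetilde W$ many times through arcs of $X$ that lie outside $\overline W$. This is not a bookkeeping nuisance to be finished by ``careful planar combinatorics'' --- it is the entire content of Torhorst's theorem, and the standard proofs (Whyburn via cyclic element theory, or the Carath\'eodory-style route via a Riemann map of the simply connected domain $U$ and its continuous boundary extension) each devote their full machinery to precisely this step. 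As written, the sketch replaces the hard part of the theorem with a statement that the hard part remains to be done, so it does not constitute a proof. If you want a self-contained route, the cleanest modern one is to note that $U$ is simply connected (its complement $\clC\setminus U$ is the continuum you built), take a Riemann map $\varphi:\D\to U$, prove that $\varphi$ extends continuously to $\overline\D$ using uniform local connectedness of $X$ together with the null-sequence lemma, and conclude $\partial U=\varphi(\partial\D)$ is a Peano continuum; but that continuity-of-extension step is again where all the work lives.
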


The folllowing are the  main results which will be used in our proofs. All of them (or slight modifications therein), except Proposition \ref{prop:index}, were proven in \cite{bfjk}. The latter is new and its proof is contained in Subsection \ref{thenewproof}. 

\begin{lem}[\bf Boundary maps out] \label{mapout}
Let $\Omega \subset \C$ be a bounded domain with finite Euler
characteristic and let $f$ be a meromorphic map 
in a neighborhood of $\overline{\Omega}$. Assume that there exists a component
$D$ of \ $\clC \setminus
f(\bd\Omega)$, such that:
\begin{itemize}
\item[$(a)$] $\overline{\Omega} \subset D$,
\item[$(b)$] there exists $z_0 \in \Omega$ such that $f(z_0) \in D$. 
\end{itemize}
Then $f$ has a weakly repelling fixed point in $\Omega$. Moreover, if
additionally $\Omega$ is simply connected with locally
connected boundary, then the assumption~$(a)$ can be replaced by: 
\begin{itemize}
\item[$(a')$] $\Omega \subsetneq D$ and $f$ has no fixed points in
$\bd \Omega \cap f(\bd\Omega)$ 
\end{itemize}
or by
\begin{itemize}
\item[$(a'')$]  $\Omega=D$, $f$ has no fixed points in $\bd \Omega$ and $f(\Omega)\neq \Omega$.
\end{itemize}
\end{lem}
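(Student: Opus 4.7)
The plan is to construct, inside $\Omega$, a subdomain $V$ on which $f$ is proper onto $D$ (or onto a topologically finite piece of it), and then apply Buff's rational-like theorems, Theorem~\ref{rat-like} and Theorem~\ref{cor:boundarycontact}, to produce a weakly repelling fixed point. First I would define $V$ to be the connected component of the open set $\Omega\cap f^{-1}(D)$ that contains the point $z_0$ provided by hypothesis~(b). The key input for properness is the fact that $f(\partial\Omega)\cap D=\emptyset$, since $D$ is a component of $\clC\setminus f(\partial\Omega)$. Consequently, if $w_n\in V$ approaches a boundary point $w\in\partial V$, then either $w\in\partial\Omega$, in which case $f(w_n)\to f(w)\in f(\partial\Omega)\subset\clC\setminus D$ combined with $f(w_n)\in D$ forces $f(w_n)\to\partial D$, or $w\in\Omega$ with $f(w)\in\partial D$ directly. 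Either way $f(w_n)\to\partial D$, so $f\colon V\to D$ is proper of some finite degree $d\ge 1$.

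In case~(a), where $\overline{\Omega}\subset D$, I would shrink $D$ to a subdomain $\tilde D\subset D$ of finite Euler characteristic such that $\overline{\Omega}\cup\{f(z_0)\}\subset\tilde D$ and $\overline{\tilde D}\subset D$ (for example, a smooth regular neighborhood of $\overline{\Omega}\cup\{f(z_0)\}$ inside the open set $D$). Replacing $D$ by $\tilde D$ in the construction of $V$, we obtain $\overline V\subset\overline{\Omega}\subset\tilde D$, and $V$ itself has finite Euler characteristic by the Riemann--Hurwitz formula applied to the proper map $f\colon V\to\tilde D$. Theorem~\ref{rat-like} then yields a weakly repelling fixed point in $V\subset\Omega$.

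For cases~(a') and~(a''), the situation is tailor-made for Theorem~\ref{cor:boundarycontact}: the domain $D$ is simply connected, being a component of $\clC\setminus f(\partial\Omega)$ where $f(\partial\Omega)$ is a continuum, and $\partial D$ is locally connected by the Torhorst Theorem~\ref{tor}, because $f(\partial\Omega)$ is a continuous image of the locally connected continuum $\partial\Omega$. The crucial step is the inclusion
\[
\partial V\cap\partial D\;\subset\;\partial\Omega\cap f(\partial\Omega),
\]
which I would verify as follows: any $z\in\partial V\cap\partial D$ lies in $\overline{\Omega}$ (since $V\subset\Omega$) but not in $\Omega$ (since $\Omega\subset D$ and $z\notin D$), hence $z\in\partial\Omega$, while $z\in\partial D\subset f(\partial\Omega)$. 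Hypotheses~(a') and~(a'') then rule out fixed points on $\partial V\cap\partial D$. If $\deg f\ge 2$, Theorem~\ref{cor:boundarycontact} concludes; if $\deg f=1$, in~(a') we have $V\subset\Omega\subsetneq D$, so $V\ne D$, while in~(a'') the equality $V=\Omega=D$ would force $f\colon\Omega\to\Omega$ to be a biholomorphism, contradicting $f(\Omega)\ne\Omega$.

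The main obstacle is the careful bookkeeping: verifying properness of $f\colon V\to D$, identifying $\partial V\cap\partial D$ with the ``forbidden'' set $\partial\Omega\cap f(\partial\Omega)$, and ensuring a subdomain $\tilde D$ of finite Euler characteristic in case~(a) even when $D$ itself may have a wild boundary. Once these are in place, Buff's theorems yield the weakly repelling fixed point immediately.
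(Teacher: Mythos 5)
Your proposal is correct and follows essentially the same route as the paper: pull back $D$ to the component $D'=V$ of its preimage containing $z_0$, show $V\subset\Omega$ and that $f\colon V\to D$ is proper using $f(\partial\Omega)\cap D=\emptyset$, then apply Theorem~\ref{rat-like} in case~(a) and Theorem~\ref{cor:boundarycontact} (with the Torhorst Theorem and the inclusion $\partial V\cap\partial D\subset\partial\Omega\cap f(\partial\Omega)$) in cases~(a') and~(a''). The only cosmetic difference is that you secure finite Euler characteristic of the target by shrinking $D$ to a regular neighborhood $\tilde D$, whereas the paper argues directly that $D$ itself has finite Euler characteristic; both are fine.
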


\begin{proof}
By the assumption (b), there exists a component $D^{\prime}$ of $f^{-1}(D)$
containing $z_0$. Observe that 
\[
D^{\prime} \subset \Omega.
\]
To see this, suppose that $D^{\prime}$ is not contained in $\Omega$. 
Then there exists $z\in D^{\prime}\cap \bd\Omega$. Consequently, $f(z)\in 
D \cap f(\bd\Omega)$. This is a contradiction since, by definition, $D \cap 
f(\bd\Omega) =\emptyset$.

As a consequence, $D^{\prime}$ is bounded. Moreover, since $\Omega$ has finite
Euler characteristic, $\bd\Omega$ (and hence $f(\bd\Omega)$ and $\bd D$) has a
finite number of components, so $D$ has finite Euler characteristic. One can check that $D'$ has finite Euler characteristic and the restriction $f:D^{\prime} \to D$ is
proper. Moreover, the assumption~(a) implies $\overline{D'}
\subset D$. Hence (possibly after a change of coordinates in
$\clC$ by a M\"obius transformation), $f:D^{\prime} \to D$ is a 
rational-like map, so $f$ has a weakly repelling fixed point 
in $D^{\prime} \subset \Omega$ by Theorem \ref{rat-like}.

Now, assume that $\Omega$ is simply connected with locally
connected boundary, and the assumption~(a) is replaced by~(a$'$). Then $\bd\Omega$ (and hence $f(\bd\Omega)$) is a 
locally connected continuum in $\clC$. Moreover we also have that $D$ is
simply connected and, by  Torhorst Theorem (if $X$ is a locally connected 
continuum in $\C$, then the boundary of every component of $\C\setminus X$ is a locally connected continuum) \cite[pp. 106, Theorem 2.2]{whyburn}, has locally connected boundary. Moreover, since $D'\subset
\Omega \subset D$ and the boundary of $D$ is contained in $f(\bd \Omega)$, the
intersection of the boundaries of $D$ and $D'$ is either empty or is contained
in $\bd \Omega \cap f(\bd \Omega)$. 
This together with the condition~(a$'$) implies that the restriction $f:D^{\prime} \to D$ satisfies the assumptions of Theorem \ref{cor:boundarycontact}, providing the existence of a weakly repelling fixed point. 

Finally suppose that (a$''$) is satisfied instead of (a$'$), so that $\partial \Omega \subset \partial f(\Omega)$.  Again, let $D'\subset \Omega$ be the connected component of $f^{-1}(D)$ containing $z_0$. By assumption, there exist points in $\Omega$ which do not map into $\Omega$ hence $D'\subsetneq D$. Since $f:D'\to D$ is proper, it has no fixed points in $\bd D' \cap \partial D$ and $D'\neq D$ we are again under the assumptions of Theorem \ref{cor:boundarycontact} which ends the proof.
\end{proof}

Lemma~\ref{mapout} implies the following two corollaries.

\begin{cor}[\bf Continuum surrounds a pole and maps out] \label{cor:mapout1} 
Let $X \subset \C$ 
be a continuum and let  $f$ be a meromorphic map in a neighborhood of $K(X)$. Suppose that:
\begin{itemize}
\item[$(a)$] $f$ has no poles in $X$, 
\item[$(b)$] $K(X)$ contains a pole of $f$,
\item[$(c)$] $K(X) \subset \ext(f(X))$.
\end{itemize}
Then $f$ has a weakly repelling fixed point in the interior of $K(X)$.
\end{cor}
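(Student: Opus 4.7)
The plan is to apply Lemma~\ref{mapout} (Boundary maps out) to a suitable simply connected subdomain $\Omega\subset K(X)$. Let $p_0\in K(X)$ be a pole of $f$ provided by hypothesis~(b); by~(a) we have $p_0\notin X$, so $p_0$ belongs to some bounded connected component of $\C\setminus X$, which I would take as $\Omega$. Since $\overline{\Omega}\subset K(X)$ and $\partial\Omega\subset X$, hypothesis~(a) guarantees that $f$ is holomorphic in a neighborhood of $\partial\Omega$ and meromorphic in a neighborhood of $\overline{\Omega}$, matching the regularity assumptions of Lemma~\ref{mapout}.

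To invoke that lemma I need $\Omega$ to have finite Euler characteristic. I would obtain this from the topological fact that every connected component of $\clC\setminus X$ is simply connected whenever $X$ is a continuum in $\clC$: the complement in $\clC$ of such a component equals $X$ together with the remaining components of $\clC\setminus X$, and since the boundary of every such component is contained in $X$, this union is connected, so the component itself is simply connected. In particular $\chi(\Omega)=1$.

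Next I would verify the two hypotheses of Lemma~\ref{mapout}. Let $D$ be the component of $\clC\setminus f(\partial\Omega)$ containing $\infty$. Since $f(\partial\Omega)\subset f(X)$, the component $\ext(f(X))$ of $\clC\setminus f(X)$ containing $\infty$ is contained in $D$, and (c) then yields
\[
\overline{\Omega}\subset K(X)\subset \ext(f(X))\subset D,
\]
which is condition~(a). Taking $z_0=p_0$, the fact that $p_0$ is a pole gives $f(z_0)=\infty\in D$, which is condition~(b). Lemma~\ref{mapout} then produces a weakly repelling fixed point of $f$ in $\Omega$, which lies in $\inter(K(X))$, as required.

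The only delicate step I expect is the topological claim that components of $\clC\setminus X$ are simply connected; this is standard (Alexander duality, or the elementary boundary argument sketched above). Once it is in hand, the rest is a direct verification of the hypotheses of Lemma~\ref{mapout}, so no additional machinery should be needed.
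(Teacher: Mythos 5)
Your argument is correct and follows essentially the same route as the paper's proof: take the bounded complementary component $\Omega$ of $X$ containing the pole, observe that $f(\partial\Omega)\subset f(X)$ so hypothesis (c) forces $\overline{\Omega}\subset K(X)\subset\ext(f(\partial\Omega))=D$, and apply Lemma~\ref{mapout} with $z_0$ the pole. The only cosmetic difference is that you spell out the (standard) simple connectivity of complementary components of a continuum, which the paper takes for granted.
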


\begin{proof} Let $p \in K(X)$ be a pole of $f$. Observe that by the assumption~(a), the set $f(X)$ (and hence $K(f(X))$) is a continuum in
$\C$. Moreover, (a) implies
\[
p \in \Omega \subset \overline\Omega \subset K(X)
\]
for a bounded simply connected component $\Omega$ of $\clC \setminus X$. 
We have $\bd\Omega \subset X$, which gives $f(\bd\Omega) \subset f(X)$, so by the assumption~(c),
\[
K(X) \subset \ext(f(\bd\Omega)),
\]
which implies $\overline\Omega \subset\ext(f(\bd\Omega))$.

Let $D = \ext(f(\bd\Omega))$. We have $\overline\Omega \subset D$, 
$p \in \Omega$ and $f(p) = \infty \in D$. Hence, the assumptions of
Lemma~\ref{mapout} are satisfied for
$\Omega, D, p$, so $f$ has a weakly
repelling fixed point in $\Omega$, which is a subset of the interior of
$K(X)$.
\end{proof}

\begin{cor}[\bf Continuum maps out twice] \label{cor:mapout2} 
Let $X \subset \C$ be a continuum and let
$f$ be a meromorphic map in a neighbourhood of $X \cup K(f(X))$.
Suppose that:
\begin{itemize}
\item[$(a)$] $f$ has no poles in $X$, 
\item[$(b)$] $X \subset K(f(X))$,
\item[$(c)$] $f^2(X) \subset \ext(f(X))$.
\end{itemize}
Then $f$ has a weakly repelling fixed point in the interior of $K(f(X))$.
\end{cor}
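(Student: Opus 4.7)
The plan is to apply Lemma~\ref{mapout} (Boundary maps out) to a suitable bounded, simply connected domain $\Omega$ sitting inside $K(f(X))$, taking for the target $D$ the component of $\C \setminus f(\bd\Omega)$ that contains $K(f(X))$.

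First I would observe that hypothesis~(c) forces $X \not\subset f(X)$: otherwise $f(X) \subset f^{2}(X) \subset \ext(f(X))$, which is impossible since $f(X)$ is a nonempty compactum disjoint from $\ext(f(X))$. Combined with~(b), this lets me pick a point $x_0 \in X \setminus f(X)$ that lies in a bounded complementary component $\Omega_0$ of $\C \setminus f(X)$. I then define $\Omega$ to be the bounded, simply connected domain obtained by filling in all the holes of $\Omega_0$, so that $\bd\Omega \subset \bd\Omega_0 \subset f(X)$. A short separation argument shows that every bounded component of $\C \setminus \Omega_0$ must lie in $K(f(X))$: otherwise the connected set $\ext(f(X))$ would have to cross $\bd\Omega_0 \subset f(X)$. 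Hence $\overline{\Omega} \subset K(f(X))$, and by assumption $f$ is meromorphic on a neighborhood of $\overline{\Omega}$.

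The core of the argument is then quick. By~(c), $f(\bd\Omega) \subset f^{2}(X) \subset \ext(f(X))$, so the connected set $K(f(X))$ is disjoint from $f(\bd\Omega)$ and therefore lies in a single component $D$ of $\C \setminus f(\bd\Omega)$. This yields condition~(a) of Lemma~\ref{mapout}, namely $\overline{\Omega} \subset K(f(X)) \subset D$. For condition~(b) I take $z_0 = x_0 \in \Omega$ and note that $f(x_0) \in f(X) \subset K(f(X)) \subset D$. Since $\Omega$ is simply connected it has finite Euler characteristic, so Lemma~\ref{mapout} supplies a weakly repelling fixed point of $f$ in $\Omega$, which is contained in $\inter K(f(X))$.

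The step I expect to be most delicate is the construction of $\Omega$. If $f(X)$ is a wild continuum, the component $\Omega_0$ can itself be infinitely connected, and Lemma~\ref{mapout} does not apply directly to it. Filling in the holes of $\Omega_0$ restores finite Euler characteristic, but then one has to verify two compatibility conditions: that $\overline{\Omega}$ still lies inside $K(f(X))$ so that $f$ is meromorphic on a neighborhood of it, and that $\bd\Omega$ remains inside $f(X)$ so that $f(\bd\Omega) \subset \ext(f(X))$ continues to follow from~(c). Both reduce to the planar separation observation above, and once that is in place the remainder is a direct application of the Boundary maps out lemma.
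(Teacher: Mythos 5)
Your argument is correct and follows essentially the same route as the paper: choose a bounded complementary component $\Omega$ of $\C\setminus f(X)$ meeting $X$, observe via~(c) that $f(\bd\Omega)\subset f^2(X)\subset\ext(f(X))$ so that $K(f(X))$ (hence $\overline\Omega$) lies in a single component $D$ of $\clC\setminus f(\bd\Omega)$, and apply Lemma~\ref{mapout}. The only superfluous step is the ``filling in of holes'': since $f$ has no poles on the continuum $X$, the set $f(X)$ is itself a continuum, so every component of $\clC\setminus f(X)$ --- in particular $\Omega_0$ --- is already simply connected (and in fact $X\cap f(X)=\emptyset$, so all of $X$, not just $x_0$, lies in $\Omega_0$).
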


\begin{proof} By the assumption (a), the set $f(X)$ (and hence $K(f(X))$) is a continuum in
$\C$ and $f^2(X)$ is a continuum in $\clC$. Moreover, $X \cap f(X) = \emptyset$
(otherwise $f(X) \cap f^2(X) \ne \emptyset$, which contradicts the assumption~(c)). Hence, by (b),
\[
X \subset \Omega \subset \overline\Omega \subset K(f(X))
\]
for some bounded simply connected component $\Omega$ of $\clC \setminus f(X)$. 
We have $\bd\Omega \subset f(X)$, so $f(\bd\Omega) \subset f^2(X)$ and by the assumption~(c),
\[
K(f(X)) \subset \clC \setminus f^2(X) \subset \clC \setminus f(\bd\Omega),
\]
which gives $K(f(X)) \subset D$ for some component $D$ of $\clC
\setminus f(\bd\Omega)$. Consequently, $\overline\Omega \subset K(f(X))\subset D$. Moreover, for any $z_0 \in X$ we have $z_0 \in \Omega$ and $f(z_0) \in f(X)
\subset D$. Hence, the assumptions of Lemma~\ref{mapout} are satisfied for
$\Omega, D$ and $z_0$, so $f$ has a weakly
repelling fixed point in $\Omega$, which is contained in the interior of
$K(f(X))$.
\end{proof}

The next proposition is new and it will be key in our arguments. Recall that the multiplicity of  a point $z_0$ fixed by a holomorphic map $f$ is the order of $z_0$ as a zero of $f(z)-z$.

\begin{prop}\label{prop:index} Let $\Omega \subset \C$ be a simply connected bounded domain and let $f$ be a meromorphic map in a neighborhood of $\overline{\Omega}$, such that $f(\partial \Omega) \subset \Omega$. Then $\Omega$ contains exactly $m+1$ fixed points of $f$, counted with multiplicities, where $m$ is the number of poles of $f$ contained in $\Omega$, counted with multiplicities. 
\end{prop}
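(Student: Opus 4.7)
\emph{Proof plan.} The plan is to apply the argument principle to the meromorphic function $g := f - \Id$ on $\Omega$ and show that the winding number of $g$ around $0$ along the boundary equals $1$. Since the zeros of $g$ in $\Omega$ are precisely the fixed points of $f$ (with matching multiplicities) and its poles in $\Omega$ are precisely the poles of $f$ with the same orders, this gives $(\text{number of fixed points}) - m = 1$, which is the statement. First, $g$ is well-defined and nonvanishing on a neighborhood of $\partial \Omega$: the hypothesis $f(\partial \Omega) \subset \Omega \subset \C$ forces $f$ to have no poles on $\partial \Omega$, and for $z \in \partial \Omega$ we have $f(z) \in \Omega$ while $z \notin \Omega$, so $f(z) \ne z$. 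As $g$ is meromorphic and not identically zero on a neighborhood of the compact set $\overline{\Omega}$, its zeros and poles in $\overline{\Omega}$ form a finite set.

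Because $\partial \Omega$ need not be a rectifiable, or even locally connected, curve, I would first approximate $\Omega$ from the inside by nicer subdomains. Pick a Riemann map $\phi : \D \to \Omega$ with $\phi(0) =: w_0$ and set $\Omega_r := \phi(\{|w|<r\})$ for $r \in (0,1)$, a simply connected domain bounded by the analytic Jordan curve $\phi(\{|w|=r\})$. Since $f(\partial \Omega)$ is compactly contained in $\Omega$, and $d(\phi(w), \partial \Omega) \to 0$ uniformly as $|w| \to 1$ by properness of $\phi : \D \to \Omega$, the continuity of $f$ near $\partial \Omega$ ensures that for $r$ close enough to $1$ one has $f(\partial \Omega_r) \subset \Omega_r$, and moreover $\Omega_r$ contains every fixed point and every pole of $f$ lying in $\Omega$. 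The argument principle, applied on $\Omega_r$, then gives
\[
(\text{number of fixed points of } f \text{ in } \Omega) - m \;=\; n\bigl((f-\Id)|_{\partial \Omega_r},\, 0\bigr),
\]
so it remains to prove that this winding number equals $1$.

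The key step is a homotopy that exploits the simple connectivity of $\Omega_r$ in an essential way. Using $\phi$, define the deformation retraction
\[
G(w,s) := \phi\bigl((1-s)\phi^{-1}(w)\bigr), \qquad w \in \Omega_r,\ s \in [0,1],
\]
which satisfies $G(w,0) = w$, $G(w,1) = w_0$, and $G(w,s) \in \Omega_r$ throughout (since $|(1-s)\phi^{-1}(w)| < r$). Set $H_s(z) := G(f(z), s) - z$ for $z \in \partial \Omega_r$. Because $f(z) \in \Omega_r$ by the choice of $r$, we have $G(f(z),s) \in \Omega_r$ while $z \in \partial \Omega_r$, so $H_s(z) \ne 0$ for every $s \in [0,1]$ and $z \in \partial \Omega_r$. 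Homotopy invariance of the winding number then gives $n(H_0,0) = n(H_1,0)$; since $H_0 = f - \Id$ and $H_1(z) = w_0 - z$ has winding $1$ around $0$ (as $w_0 \in \Omega_r$), the winding number of $f - \Id$ along $\partial \Omega_r$ around $0$ equals $1$, as required.

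The main obstacle is arranging a homotopy that genuinely avoids $0$. The naive Euclidean straight-line homotopy $sf(z) + (1-s)w_0 - z$ works only for convex $\Omega$, because for non-convex $\Omega$ the Euclidean segment from $w_0 \in \Omega$ to $f(z) \in \Omega$ may leave $\Omega$ and cross $\partial \Omega$, opening the door to $H_s(z) = 0$. Replacing Euclidean interpolation by the $\phi$-pullback of radial contraction in $\D$ converts the simple connectivity of $\Omega$ into a contraction that is performed genuinely inside $\Omega_r$, which is exactly what keeps $G(f(z),s)$ inside $\Omega_r$ and therefore away from $z \in \partial \Omega_r$.
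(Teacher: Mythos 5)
Your proof is correct and follows essentially the same route as the paper: shrink $\Omega$ to an analytic Jordan curve via the Riemann map so that $f$ maps the curve strictly inside, apply the argument principle to $f-\Id$, and show the relevant winding number is $1$ by a contraction homotopy. The only (cosmetic) difference is that you contract $f$ to the point $w_0$ directly through the Riemann coordinates, whereas the paper reaches the same winding-number count by invoking its lemma on winding numbers of disjoint closed curves (Lemma~\ref{homolemma}), whose proof is the same contraction idea.
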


\begin{rem*}
Notice that the number of fixed points of $f$ in $\Omega$ counted with multiplicity is the sum of the {\em Lefschetz indices} of the fixed points in $\Omega$. For similar results on the sum of the Lefschetz indices for holomorphic maps and relation to the Lefschetz Fixed Point Theorem in special situations, refer to \cite{milgol,johannes}. 
\end{rem*}

Observe that Proposition~\ref{prop:index}, as opposed to the lemmas above, does not give any information about the nature of the fixed points: these might be attracting, repelling or indifferent.  

\subsubsection{Winding numbers: Proof of Proposition \rm \ref{prop:index}}
\label{thenewproof} \ \\
Given a closed oriented curve $\gamma:[0,1] \to \C$  and a point $P$ not in $\gamma$, we denote by $\wind(\sigma,P)$ the {\em  winding number (or index)} of $\gamma$ with respect to the point $P$, i.e.~the number of turns that $\gamma$ makes around $P$.  We will use the symbol $\gamma$ for both the curve and its image in the plane, $\gamma([0,1])$. The following is a simple application of  the Argument Principle  to the map $f(z)-z$.

\begin{lem}[\bf Argument Principle] \label{argpple}
Let $\Omega\subset \C$ be a domain bounded by  a Jordan curve $\gamma:[0,1]\to \C$ and let $f$ be a meromorphic map in a neighborhood of $\overline{\Omega}$  such that $f(z)\neq z,\infty$ for all points $z\in \gamma$. Set $\sigma(t):=f(\gamma(t))$. Let $\Fix (f)$ be the set of fixed points of $f$ and $P(f)$ be the set of poles of $f$. Then 
\[ 
\wind(\sigma(t)-\gamma(t),0) = \# (\Fix (f) \cap \Omega) - \# (P(f) \cap \Omega) 
\]
counted with multiplicities. 
\end{lem}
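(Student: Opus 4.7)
The plan is to apply the classical Argument Principle to the auxiliary meromorphic function $h(z):=f(z)-z$, which is well defined on a neighborhood of $\overline{\Omega}$ since $f$ is. First I would identify the relevant zeros and poles: the zeros of $h$ in $\Omega$ are exactly the fixed points of $f$ in $\Omega$, and the convention just stated before the statement (the multiplicity of a fixed point $z_0$ is the order of $z_0$ as a zero of $f(z)-z$) makes these multiplicities coincide with the zero orders of $h$. Since $-z$ is entire, the poles of $h$ in $\Omega$ coincide with $P(f)\cap\Omega$ with the same multiplicities. The hypotheses $f(z)\neq z$ and $f(z)\neq\infty$ for $z\in\gamma$ translate into $h$ having neither zeros nor poles on $\gamma$, so the Argument Principle is applicable.

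Assuming $\gamma$ is oriented as the positively oriented boundary of $\Omega$, the classical Argument Principle then yields
\[
\frac{1}{2\pi i}\oint_\gamma \frac{h'(z)}{h(z)}\,dz \;=\; Z(h,\Omega)-P(h,\Omega),
\]
where $Z$ and $P$ denote the number of zeros and poles of $h$ in $\Omega$ counted with multiplicity. By the previous identifications, the right-hand side is exactly $\#(\Fix(f)\cap\Omega)-\#(P(f)\cap\Omega)$. Next I would identify the left-hand side with the desired winding number: via the change of variable $w=h(z)$ the integral becomes $\frac{1}{2\pi i}\int_{h\circ\gamma} dw/w$, and by definition this is the winding number of $h\circ\gamma$ about $0$. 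Since $h(\gamma(t))=f(\gamma(t))-\gamma(t)=\sigma(t)-\gamma(t)$, this is precisely $\wind(\sigma(t)-\gamma(t),0)$.

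The main technical nuisance is that a Jordan curve need not be smooth, nor even rectifiable, so neither the contour integral nor the change of variable is literally defined on $\gamma$ itself. I would handle this by approximating $\gamma$ from inside by a smooth Jordan curve $\widetilde\gamma\subset\Omega$ that encloses every zero and pole of $h$ contained in $\Omega$; this is possible because those zeros and poles form a finite set disjoint from $\gamma$. On $\widetilde\gamma$ the Argument Principle applies directly and yields the same count of zeros and poles of $h$ in the region enclosed. Finally, since $h$ has no zeros or poles in the closed annular region between $\gamma$ and $\widetilde\gamma$, the two image curves $h\circ\gamma$ and $h\circ\widetilde\gamma$ are freely homotopic in $\C\setminus\{0\}$, hence carry the same winding number about the origin. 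Combining these observations with the two equalities above yields the claim.
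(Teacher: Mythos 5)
Your proof is correct and follows exactly the route the paper indicates: applying the classical Argument Principle to $h(z)=f(z)-z$, identifying its zeros and poles in $\Omega$ with fixed points and poles of $f$, and reading the integral as $\wind(\sigma(t)-\gamma(t),0)$. The extra care you take with a possibly non-rectifiable Jordan curve (inner smooth approximation plus a free homotopy in $\C\setminus\{0\}$) is a valid refinement of the same argument, which the paper leaves implicit.
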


Hence, Lemma \ref{argpple} gives us the number of fixed points of a map $f$  inside a Jordan domain (counted with multiplicity),  if we know the number of poles, and provided we are able to compute the winding number of the curve $f(\gamma(t))-\gamma(t)$ with respect to the origin. But in many occasions this is not  an obvious computation to make. The following lemma  simplifies this counting  in the case that both curves do not intersect.  See  \cite[Lemma 4.6]{benfag} for a more general statement.

\begin{lem}[\bf Computing winding numbers] \label{homolemma}
Let   $\gamma, \sigma:[0,1] \to \C$ be  two disjoint closed curves and let $P\in \gamma$ and $Q\in \sigma$ be arbitrary points.  Then 
 \begin{equation}\label{lem:wind}
 \wind(\sigma(t)-\gamma(t),0)=\wind(\gamma,Q) + \wind(\sigma,P).
 \end{equation}
\end{lem}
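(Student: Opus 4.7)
The plan is to realize $t \mapsto \sigma(t)-\gamma(t)$ as the restriction to the diagonal of a two-parameter map on the unit square, and then homotope the diagonal inside the square to an L-shaped path that separates the dependencies on $\gamma$ and $\sigma$.

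Concretely, I would first define $G : [0,1]^2 \to \C$ by $G(u,v) = \sigma(v) - \gamma(u)$. By the disjointness hypothesis, $G(u,v) \neq 0$ for every $(u,v)$, so $G$ takes values in $\C \setminus \{0\}$. The diagonal $d(t) = (t,t)$ is mapped by $G$ to the loop whose winding number around $0$ is precisely the left-hand side of~\eqref{lem:wind}.

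Next, let $\ell$ be the L-shaped path from $(0,0)$ through $(1,0)$ to $(1,1)$. Since $[0,1]^2$ is convex, $d$ and $\ell$ are homotopic rel endpoints, and composing with $G$ produces a homotopy in $\C \setminus \{0\}$ between $G \circ d$ and $G \circ \ell$. Because $\gamma$ and $\sigma$ are closed curves, $G(0,0) = G(1,0) = G(1,1) = \sigma(0) - \gamma(0)$, so $G \circ d$ and $G \circ \ell$ are closed loops sharing a basepoint and the second is the concatenation of $u \mapsto \sigma(0) - \gamma(u)$ with $v \mapsto \sigma(v) - \gamma(1)$. Additivity of winding numbers under concatenation, together with translation-invariance and the identity $\wind(-\alpha,0) = \wind(\alpha,0)$, then gives
\[
\wind(\sigma(t)-\gamma(t),0) \;=\; \wind(\gamma,\sigma(0)) + \wind(\sigma,\gamma(0)).
\]

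To conclude, I would note that since $\sigma$ is connected and disjoint from $\gamma$, it lies in a single connected component of $\C \setminus \gamma$, so $\wind(\gamma,Q')$ is independent of $Q' \in \sigma$; the symmetric argument yields independence of $\wind(\sigma,P')$ on $P' \in \gamma$. Replacing $\sigma(0)$ and $\gamma(0)$ by arbitrary $Q \in \sigma$ and $P \in \gamma$ then delivers~\eqref{lem:wind}. I do not foresee any serious obstacle here: the only step requiring verification is the non-vanishing of $G$ on the whole square, which is immediate from disjointness; everything else is routine homotopy invariance of the winding number.
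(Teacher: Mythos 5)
Your proof is correct, and it takes a genuinely different route from the one in the paper. You realize $t\mapsto\sigma(t)-\gamma(t)$ as the restriction of the nonvanishing map $G(u,v)=\sigma(v)-\gamma(u)$ to the diagonal of the square and slide the diagonal to the L-shaped boundary path; homotopy invariance of the winding number then splits the left-hand side into $\wind(\gamma,\sigma(0))+\wind(\sigma,\gamma(0))$, after which the observation that $\wind(\gamma,\cdot)$ is constant on the component of $\C\setminus\gamma$ containing $\sigma$ (and symmetrically) yields the statement for arbitrary $P\in\gamma$, $Q\in\sigma$ --- this last observation is exactly how the paper also begins its proof. The paper instead argues by contracting one of the two curves to a point: if $\gamma$ lies in a bounded component of $\C\setminus\sigma$ (such a component is simply connected because $\sigma$ is a continuum), it contracts $\gamma$ to the constant curve $P$, obtaining $\wind(\sigma-P,0)=\wind(\sigma,P)$ while $\wind(\gamma,Q)=0$, and proceeds symmetrically when $\sigma$ lies in a bounded component of $\C\setminus\gamma$. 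Your argument has the advantage of being uniform: it requires no case distinction and covers in one stroke the configuration in which each curve lies in the unbounded component of the complement of the other (where both terms on the right vanish), a case the paper's dichotomy does not explicitly treat. What the paper's version buys is brevity and a picture matching the way the lemma is invoked later, with one curve surrounding, or surrounded by, the other. All the individual steps you flag as routine (non-vanishing of $G$, the rel-endpoints homotopy in the convex square, additivity under concatenation, and $\wind(-\alpha,0)=\wind(\alpha,0)$) are indeed unproblematic.
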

\begin{proof}
Note that the right hand side in (\ref{lem:wind}) is independent of the chosen points $P\in\gamma$ and $Q\in \sigma$. Indeed, wind$(\gamma,.)$   is constant in every connected component of  $\C\setminus \gamma$, and by hypothesis, $\sigma(t)$ is contained in the same component for all $t$.  Similarly, the second term is also independent of $P$.

We now show the equality. Suppose first that $\gamma$ belongs to a bounded component of $\C\setminus \sigma$. Then, the left side of the equation is invariant under small perturbations of $\gamma$ and therefore is invariant under homotopies of $\gamma$ in $\C\setminus \sigma$. By contracting $\gamma$ to the constant curve $P$ we have that 
\[ 
\wind(\sigma(t)-\gamma(t),0)=\wind(\sigma -P,0) =  \wind(\sigma,P),
\]
and the equality (\ref{lem:wind}) follows since $\wind(P,Q)=0$. In the symmetric case, when $\sigma$ belongs to a bounded component of $\C\setminus \gamma$, we may contract $\sigma$ to the constant curve $Q$ and proceed equivalently.
\end{proof}

With these two tools we are now ready to proof  Proposition~\ref{prop:index}. 
\begin{proof}[Proof of Proposition \rm\ref{prop:index}]
It follows from the assumptions that  $\partial \Omega$ contains neither poles nor fixed points of $f$. Since fixed points and poles are isolated in $\C$, this is still true for a sufficiently small neighborhood of $\partial \Omega$,  say $V:=\{z\in\C \mid \dist(z,\partial \Omega)<\varepsilon\}$.  Decreasing $\varepsilon$ if necessary, we may assume, by continuity of $f$, that $f(\Omega\cap V) \subset \Omega\setminus V$. 
  
Let $\varphi:\D \to \Omega$ be a Riemann map 
and set $\gamma:=\varphi(\{u \in \mathbb D: |u|=1-\delta\})$ for a small $\delta > 0$, with the canonical parametrization. Since $\varphi$ is univalent, $\gamma$ is a Jordan curve and, for sufficienlty small $\delta$, it is contained in $V\cap \Omega$.
It follows that $f(\gamma)\subset \inter(\gamma)$. Moreover, $\inter(\gamma)$ contains $m$ poles, and exactly as many fixed points as $\Omega$ does. 
 
Set $\sigma(t):=f\left(\gamma(t)\right)$ and let $P=\sigma(0)=\sigma(1)$.
Then  it is clear that $\gamma\cap \sigma=\emptyset$ and hence we are under the hypothesis of  Lemma \ref{homolemma}. Notice  that $\wind(\gamma,P)=1$ because $\gamma$ is Jordan curve and $P\in \inter(\gamma)$. Likewise, $\wind(\sigma, z_0) =0$ for all $z_0\in\gamma$, given that $\sigma\subset\inter(\gamma)$.
 Thus
  \[
 \wind(\sigma(t)-\gamma(t),0)=\wind(\gamma,P) + \wind(\sigma,z_0)=1,
 \]
which together with Lemma \ref{argpple} yields 
 \[
 \# (\Fix (f) \cap \inter(\gamma)) \ - \ \# (P(f) \cap \inter(\gamma)) =   \wind(\sigma(t)-\gamma(t),0)\ =\ 1.
 \]
Therefore
\[
\# (\Fix (f) \cap \Omega) \ = \ \# (\Fix (f) \cap \inter(\gamma)) \ = \ m+1.
\]
  \end{proof}
\section{Proof of the Main Theorem}\label{section:proofA}

In this section $N$ denotes a Newton map, that is, the Newton's method applied to a polynomial or to an entire transcendental function.

We shall prove the Main Theorem by showing that every possible Fatou component $U$ of a $N$ is simply connected. It is important to keep in mind that Newton maps have no finite weakly repelling fixed points since all their finite fixed points are attracting. 

We divide the proof into two cases. 
\begin{enumerate}[\rm (a)]
\item $U$ is an invariant Fatou component (Theorem \ref{invbasins}).
\item $U$ is a (pre)periodic Fatou component of minimal period $p>1$ or a wandering domain (Theorem \ref{thm:second}).
\end{enumerate}

\subsection{Invariant Fatou components} 

Let $N$ be a meromorphic Newton's map. According to the Fatou Classification Theorem \cite[Theorem 6]{bergweiler}  if $U$ is a (forward) invariant Fatou domain then $U$ is the immediate (super)attracting basin of an attracting or parabolic fixed point , an invariant Herman ring or an invariant Baker domain. Since we are dealing with a Newton's method, the parabolic case is not possible unless $N$ is of the special type when $\infty$ is a parabolic fixed point with derivative one, in which case $U$ is its invariant parabolic basin.  Our goal in this section is to prove the following theorem. 

\begin{thm}[\bf Forward invariant Fatou components] \label{invbasins}
Let $N$ be a meromorphic Newton's map and let $U$ be a forward invariant Fatou component of $N$. Then $U$ is simply connected.
\end{thm}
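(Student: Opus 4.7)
The plan is to proceed by contradiction: assume $U$ is a multiply connected invariant Fatou component of $N$, and derive either a finite weakly repelling fixed point of $N$ or a contradiction with Proposition~\ref{prop:unbounded_attracting_basins}. The first is impossible because all finite fixed points of $N$ are roots of $g$, hence attracting. By the Fatou classification of invariant components, combined with the absence of finite parabolic or irrationally indifferent fixed points for $N$, the component $U$ falls into one of four cases: (i) the immediate basin of a finite attracting fixed point $\zeta$, (ii) the parabolic basin at $\infty$ (in the exceptional rational case $g=Pe^Q$), (iii) a Herman ring, or (iv) a Baker domain. In each case $U$ admits an absorbing set: simply connected with $N(\overline W)\subset W$ in (i), (ii) by Remark~\ref{rem:scabsorbing}, and the domain given by Theorem~\ref{absorbing} in case (iv), whose iterates escape to $\infty$.

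For cases (i) and (ii), the aim is to apply Proposition~\ref{prop:index}: letting $W\subset U$ denote the simply connected absorbing domain, we want to exhibit a Jordan curve $\gamma\subset U$ with $N(\gamma)\subset\inter(\gamma)$ and $\inter(\gamma)$ containing at least one pole of $N$. Such $\gamma$ is constructed from the assumption of multiple connectivity: take a Jordan curve $\gamma_0\subset U$ with $\inter(\gamma_0)\cap\jul(N)\neq\emptyset$; by Lemma~\ref{lem:poles-in-holes}, some $K(N^n(\gamma_0))$ contains a pole of $N$, whereas for large $n$ we have $N^n(\gamma_0)\subset W$, and since $W$ is simply connected this forces $K(N^n(\gamma_0))\subset W$, which is pole-free. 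Selecting an appropriate intermediate step, together with a smoothing, yields the desired curve $\gamma$. Proposition~\ref{prop:index} applied to $\Omega:=\inter(\gamma)$ then gives at least $m+1\geq 2$ finite fixed points of $N$ in $\Omega$, where $m\geq 1$ is the number of poles in $\Omega$. Since $\zeta$ is the unique attracting fixed point whose basin is $U$, some fixed point $\xi\in\Omega$ is a root of $g$ different from $\zeta$, whose immediate basin $U_\xi$ is disjoint from $U$. By Proposition~\ref{prop:unbounded_attracting_basins}, $U_\xi$ is unbounded, so the connected set $U_\xi$ contains $\xi\in\inter(\gamma)$ and points near $\infty\in\ext(\gamma)$, forcing $U_\xi\cap\gamma\neq\emptyset$ and contradicting $\gamma\subset U$ with $U\cap U_\xi=\emptyset$.

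For a Baker domain (case (iv)), we use the escape property $\bigcap_n N^n(\overline W)=\emptyset$ from Theorem~\ref{absorbing}: a closed curve $\gamma\subset W$ with $K(\gamma)$ containing a pole of $N$ (again produced via Lemma~\ref{lem:poles-in-holes}) can be arranged so that, with $X:=\gamma$, one has $X\subset K(N(X))$ while $N^2(X)\subset\ext(N(X))$, so the hypotheses of Corollary~\ref{cor:mapout2} are met and $N$ has a weakly repelling fixed point in the interior of $K(N(X))\subset\C$, a contradiction. For a Herman ring (case (iii)), consider the bounded component $V$ of $\chat\setminus U$, which is a topological disk with $\partial V\subset\partial U\subset\jul(N)$ and $N(\partial V)\subset\partial V$. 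If $V$ contains no pole of $N$, then $N:V\to V$ is a holomorphic self-map of $V$ (since $N(V)$ is open, connected, with $\partial N(V)\subset N(\partial V)\subset\partial V$, so $N(V)=V$), and the Denjoy--Wolff Theorem yields a fixed point $\omega\in\overline V$ of $N$: if $\omega\in V$, its basin lies in $\overline V$, contradicting Proposition~\ref{prop:unbounded_attracting_basins}; if $\omega\in\partial V\subset\jul(N)$, then $\omega$ is a finite fixed point of $N$ in the Julia set, hence non-attracting, hence weakly repelling, again a contradiction. If $V$ contains a pole, the strategy of case (iv) applies to a continuum $X\subset U$ close to $\partial V$, and Corollary~\ref{cor:mapout1} supplies the required weakly repelling fixed point.

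The main obstacle is the technical construction of the Jordan curve $\gamma$ in cases (i) and (ii): forward iterates $N^n(\gamma_0)$ need not be Jordan, so extracting a useful Jordan curve with $N(\gamma)\subset\inter(\gamma)$ requires a careful choice of $\gamma_0$, of the intermediate index, and a smoothing argument; likewise, verifying the relative-position hypotheses of Corollary~\ref{cor:mapout1} or \ref{cor:mapout2} in cases (iii) and (iv) rests on a delicate analysis of the image dynamics relative to the absorbing structure, which is where most of the technical work lies.
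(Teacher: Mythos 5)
Your overall strategy (absorbing sets, Lemma~\ref{lem:poles-in-holes}, the fixed-point tools, and the unboundedness of basins from Proposition~\ref{prop:unbounded_attracting_basins}) matches the paper's, but the core reduction in your cases (i)--(ii) has a genuine gap. You claim that multiple connectivity lets you produce a Jordan curve $\gamma\subset U$ with a pole in $\inter(\gamma)$ \emph{and} $N(\gamma)\subset\inter(\gamma)$, so that only Proposition~\ref{prop:index} is needed. This configuration is not always achievable: for a single curve and its image there is no reason to expect the image to map inside rather than outside, and no amount of ``smoothing'' or choice of ``intermediate step'' forces it. The paper obtains a clean dichotomy only by replacing the curve with the forward-invariant set $\Gamma=N^{n_0}\bigl(\bigcup_{n\ge0}N^n(\gamma')\bigr)$, with $n_0$ maximal such that the pole is still surrounded; forward invariance of $\Gamma$ is what yields ``either $N(\partial\Omega)\cap\Omega=\emptyset$ or $N(\partial\Omega)\subset\Omega$''. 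The first alternative is a real case, handled by Corollary~\ref{cor:mapout1} or Lemma~\ref{mapout}(a$'$), including the delicate sub-case where the attracting fixed point lies on $\partial\Omega$ and one must excise a small invariant disk around it before applying Lemma~\ref{mapout}. Your proposal omits this entire branch. The same objection applies to your Baker-domain case: you assert one ``can arrange'' $X\subset K(N(X))$ with $N^2(X)\subset\ext(N(X))$, but that is again only one of several configurations; the paper's Proposition~\ref{scabsorbing} needs the $\sup\SS$ case analysis (does $\overline{N^n(W)}$ eventually stop surrounding the pole or not?), using Corollary~\ref{cor:mapout1} in one case and Lemma~\ref{mapout}(a) in the other, and even then the Baker domain must still be fed through the main $\Gamma$-argument.

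Your Herman-ring argument is also not sound as written. The bounded complementary component $V$ of $\chat\setminus U$ is a continuum, not a domain, so ``$N\colon V\to V$ is a holomorphic self-map'' and the Denjoy--Wolff Theorem do not apply directly (its interior may be empty or disconnected, and $N(\partial V)\subset\partial V$ is unjustified since $N$ could move boundary points of $V$ to other parts of $\partial U$); moreover a Denjoy--Wolff limit on the boundary need not be a fixed point of $N$. The paper avoids all of this: it takes an $N$-invariant leaf $\gamma$ of the ring, lets $\Omega$ be the Jordan domain it bounds, notes $N(\partial\Omega)=\partial\Omega$ with no fixed points on $\partial\Omega$ (irrational rotation) and $N(\Omega)\neq\Omega$ (Montel), and applies Lemma~\ref{mapout}(a$''$) to get a weakly repelling fixed point in $\Omega$. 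You should adopt that argument, and in cases (i), (ii) and (iv) you need the forward-invariant set $\Gamma$ and the full two-case analysis rather than the single configuration you describe.
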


We start by showing that invariant Herman rings cannot exist for $N$. A different proof can be found in \cite{johannes}.

\begin{prop}
\label{nohermans}
A Newton map $N$ has no invariant Herman rings.
\end{prop}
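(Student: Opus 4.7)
The plan is to argue by contradiction via Lemma \ref{mapout} in its form with hypothesis $(a'')$. Assume that $N$ admits an invariant Herman ring $H$. By the standard structure of Herman rings, $H$ is foliated by $N$-invariant Jordan curves on each of which $N$ is conjugate to an irrational rotation; fix one such curve $\gamma \subset H$ and set $\Omega := \inter(\gamma)$. Then $\Omega$ is a bounded, simply connected Jordan domain with locally connected boundary, $N(\partial \Omega) = \gamma$, and $N$ has no fixed points on $\gamma$ because the restriction is a free rotation.

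The next step is to exhibit a pole of $N$ inside $\Omega$. The boundary component of $H$ contained in $\overline{\Omega}$ lies entirely in $\jul(N)$ and is disjoint from $\gamma \subset \fat(N)$, hence it lies in $\Omega$; in particular $K(\gamma) \cap \jul(N) \neq \emptyset$. Using the invariance $N^n(\gamma) = \gamma$ for every $n \geq 0$, Lemma \ref{lem:poles-in-holes} produces a pole of $N$ in $K(\gamma)$, and since $\gamma$ carries no pole this pole must belong to $\Omega$. In particular $\infty \in N(\Omega)$, so $N(\Omega) \neq \Omega$.

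Finally, I invoke Lemma \ref{mapout} with $D := \Omega$, which is one of the two components of $\clC \setminus N(\partial \Omega) = \clC \setminus \gamma$. All clauses of hypothesis $(a'')$ have now been verified, and for condition $(b)$ it suffices to pick any $z_0 \in H \cap \Omega$: since $N$ preserves each leaf of the invariant foliation of $H$, the image $N(z_0)$ stays on the same leaf and hence remains in $\Omega$. The lemma then yields a weakly repelling fixed point of $N$ in the bounded domain $\Omega \subset \C$, contradicting the fact that every finite fixed point of a Newton map is attracting. The only step carrying any content is the extraction of the pole, which reduces to combining $N^n(\gamma) = \gamma$ with Lemma \ref{lem:poles-in-holes} once one observes that the boundary component of $H$ lying on the $\inter(\gamma)$-side belongs to $\Omega \cap \jul(N)$; no genuine obstacle is anticipated.
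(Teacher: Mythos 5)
Your argument is correct and follows essentially the same route as the paper: both apply Lemma~\ref{mapout} under hypothesis $(a'')$ to the Jordan domain $\Omega$ bounded by an invariant leaf $\gamma$ of the ring, taking $D=\Omega$ and using the irrational rotation to rule out fixed points on $\partial\Omega$. The only (harmless) difference is how $N(\Omega)\neq\Omega$ is certified: the paper notes that otherwise $\{N^n\}$ would be normal on $\Omega$ by Montel's theorem, contradicting $K(\gamma)\cap\jul(N)\neq\emptyset$, whereas you extract a pole in $\Omega$ via Lemma~\ref{lem:poles-in-holes}; both verifications are valid.
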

\begin{proof}
Suppose $N$ has an invariant Herman ring $U$. Then $U$ is conformally equivalent to an annulus, foliated by simple closed curves which are invariant under $N$, and on which the dynamics is conjugate to an irrational rigid rotation. Choose $\gamma$ to be one such curve, and let $\Omega$ be the domain bounded by $\gamma$. Observe that  $f(\Omega \,\cap \, U)=\Omega \, \cap \, U$, but there must be points in $\Omega$ which are mapped outside $\Omega$ or, otherwise, $\{N^n\}$ would form a normal family in $\Omega$ by Montel's Theorem, contradicting that $\Omega\cap J(N)\neq \emptyset$.   Therefore  we are under the hypotheses of  Lemma \ref{mapout} (a''). Indeed,  $\Omega$   is simply connected with locally connected boundary and $N(\bd \Omega)=\bd \Omega$ so we may choose $D=\Omega$.   Moreover, $N$ has no fixed points in $\bd \Omega$ because $N|_{\partial \Omega}$ is conjugate to an irrational rotation. Therefore, by Lemma \ref{mapout},  we conclude that $N$ has a weakly repelling fixed point in $\Omega$, a contradiction. 
\end{proof}

Our next step is to prove that Baker domains for Newton's method always admit simply connected absorbing sets. 

\begin{prop}[\bf Simply connected absorbing sets]
\label{scabsorbing}
Let $N$ be  a Newton map and let $U$ be  an invariant  Baker domain of $N$. Then $U$ has a simply connected absorbing domain. 
\end{prop}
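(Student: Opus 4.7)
The plan is to take the absorbing domain $W\subset U$ provided by Theorem~\ref{absorbing} and replace it with its simply connected hull
\[
\hat W\ :=\ \clC\setminus C_\infty,
\]
where $C_\infty$ denotes the connected component of $\clC\setminus W$ containing $\infty$. By construction $\hat W$ is an open simply connected subset of $\clC$ which contains $W$, so by Theorem~\ref{absorbing}(d) it is automatically absorbing as soon as we know $\hat W\subset U$. The whole problem therefore reduces to showing that every bounded component $H$ of $\clC\setminus W$ satisfies $H\subset U$.

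I would argue this by contradiction. Fix a bounded component $H$ of $\clC\setminus W$ and suppose $H\not\subset U$. Since $\overline{W}\subset U$ (Theorem~\ref{absorbing}(a)), we have $\partial H\subset \partial W\subset U$, so $H$ is a bounded open connected set whose topological frontier lies inside the Fatou component $U$. Because distinct Fatou components are pairwise disjoint open sets, this forces either $(i)$ $H\cap\jul(N)\neq\emptyset$, or $(ii)$ $H$ is entirely contained in some Fatou component $U'\neq U$, which is then bounded.

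In case $(i)$ I would pick a Jordan curve $\gamma\subset W$ enclosing $H$, so that $K(\gamma)\cap\jul(N)\neq\emptyset$. Lemma~\ref{lem:poles-in-holes} then produces $n\geq 0$ such that $K(N^n(\gamma))$ contains a pole of $N$. Theorem~\ref{absorbing}(b) keeps every iterate $N^j(\gamma)$ inside $W$, hence away from any pole of $N$; combined with the emptiness $\bigcap_j N^j(\overline{W})=\emptyset$ from Theorem~\ref{absorbing}(c), this should allow me to enlarge $n$ so that additionally $K(N^n(\gamma))\subset \ext(N^{n+1}(\gamma))$. Setting $X:=N^n(\gamma)$ then fulfils all hypotheses of Corollary~\ref{cor:mapout1}, producing a finite weakly repelling fixed point of $N$---impossible, since every finite fixed point of a Newton map is attracting. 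Case $(ii)$ is treated analogously: the bounded Fatou component $U'\subset H$ cannot be an immediate attracting basin by Proposition~\ref{prop:unbounded_attracting_basins}, and (after passing to a suitable iterate to arrange invariance) Proposition~\ref{nohermans} rules out Herman-ring behaviour; for the remaining Fatou component types I would apply Proposition~\ref{prop:index} to a Jordan curve $\gamma\subset W$ around $U'$, after enough iterations to ensure $N(\gamma)\subset \inter(\gamma)$, producing at least one finite (hence attracting) fixed point of $N$ in $\inter(\gamma)$, whose basin would then be trapped in the bounded set $K(\gamma)$, once again contradicting Proposition~\ref{prop:unbounded_attracting_basins}.

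The main obstacle I foresee is the geometric separation $K(N^n(\gamma))\subset\ext(N^{n+1}(\gamma))$ needed to invoke Corollary~\ref{cor:mapout1}: a priori the successive iterates $N^j(\gamma)$ might all encircle the same pole. Here Theorem~\ref{absorbing}(b,c) is crucial, because the nested sequence $\overline{N^j(W)}$ has empty intersection, and combined with the local univalence of $N$ on $W$ (the last assertion of Theorem~\ref{absorbing}) this should force the enclosed regions $K(N^j(\gamma))$ to eventually drift past any fixed pole, yielding the desired iterate $n$.
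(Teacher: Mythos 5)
Your reduction is sound and in fact follows the same skeleton as the paper's argument: filling in the bounded complementary components of $W$ yields a simply connected domain, and the entire content of the proof is to show that no such hole can meet the Julia set. (If a hole $H$ avoids $\jul(N)$, it lies in a single Fatou component which, containing $\partial H\subset\overline W\subset U$, must be $U$ itself; so your case $(ii)$ is vacuous.) The genuine gap sits exactly where you flag "the main obstacle": the existence of an iterate $n$ for which $K(N^n(\gamma))$ contains a pole \emph{and} $K(N^n(\gamma))\subset\ext(N^{n+1}(\gamma))$. Neither Theorem~\ref{absorbing}(c) nor the local univalence of $N$ on $W$ forces the filled regions $K(N^j(\gamma))$ to drift past a pole: property (c) only guarantees that the curves $N^j(\gamma)$ themselves eventually leave every compact set, and they can do so while continuing to wind around the same pole $p$ for all $j$ (think of larger and larger loops encircling $p$). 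In that scenario $p\in K(N^j(\gamma))\cap K(N^{j+1}(\gamma))$ for every $j$, so the separation hypothesis of Corollary~\ref{cor:mapout1} never becomes available. A secondary issue: you would also need $N^n(\gamma)\cap N^{n+1}(\gamma)=\emptyset$ before discussing the relative position of the two filled curves, and this is not automatic since both lie in $W$; the paper sidesteps it by replacing $\gamma$ with the boundary of the component $\Omega$ of $\C\setminus\overline W$ containing the pole, for which $\partial\Omega\cap N(\partial\Omega)=\emptyset$ follows from $\overline{N(W)}\subset W$.

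The persistent-winding case is precisely where the paper does its real work, and that piece is missing from your proposal. The paper sets $\SS=\{s\ge0: p\text{ lies in a bounded component of }\C\setminus\overline{N^s(W)}\}$. If $\sup\SS<\infty$ one does land in the situation you want: the pole stops being surrounded after time $S$, and Corollary~\ref{cor:mapout1} applies to the boundary of the last surrounding component. But if $\sup\SS=\infty$, a different fixed-point argument is needed: the bounded complementary component $\Omega''$ of $\overline{N^n(W)}$ containing $p$ grows, as $n\to\infty$, to contain any prescribed pair of points $z_0$ and $N(z_0)$, while $N(\partial\Omega'')\subset N^n(W)$ remains disjoint from $\overline{\Omega''}$; one then invokes Lemma~\ref{mapout} with hypothesis $(a)$ --- which only requires $\overline{\Omega''}\subset D$ for a component $D$ of $\clC\setminus N(\partial\Omega'')$ together with one point of $\Omega''$ mapping into $D$, not that $\Omega''$ maps to the exterior of its boundary's image --- to produce the weakly repelling fixed point. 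Without an argument of this kind your proof does not close.
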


\begin{proof}
By Theorem \ref{absorbing}, we know that $U$ has an absorbing set $W$ such that $\overline{W}\subset U$ and $\overline{N(W)} \subset W$. Assume  that $W$ cannot be chosen to be simply connected.  Then, there exists a closed curve $\gamma \subset W$ such that $K(\gamma) \cap \jul(N) \neq \emptyset$. By Lemma  \ref{lem:poles-in-holes}, there exists $n\geq 0$, such that $K(N^n(\gamma))$ contains a pole $p$ of $N$. Let $\Omega$ be the connected component of $\C \setminus \overline{W}$ containing $p$. Since $W$ is connected,  $\Omega$ is simply connected. 

Given that $\overline{N(W)} \subset W$, we know that $N(\partial \Omega)\subset  \C\setminus \overline{\Omega}$, in particular $\partial \Omega\cap N(\partial \Omega)=\emptyset$. Now we  have  two possibilities: either $\overline{\Omega} \subset \ext(N(\partial \Omega))$ or $\overline{\Omega} \subset K(N(\partial \Omega))$. In the first case, we use Corollary \ref{cor:mapout1} with $X=\partial \Omega$ to obtain a weakly repelling fixed point in $\Omega$, a contradiction. So we may assume that 
\[
\overline{\Omega} \subset K(N(\partial \Omega)).
\]

Let
\[
\SS= \{s \ge 0: p \text{ is contained in a bounded component of } \C \setminus \overline{N^s(W)}\}.
\]
Note that $0 \in \SS$, so $\sup \SS$ is well defined.
We consider two further subcases.

\subsubsection*{Case $($\rm{i}$)$: $\sup\SS= S< \infty$}

Then $p$ is contained in a bounded
component $\Omega'$ of $\C \setminus \overline{N^S(W)}$ but is not contained in any bounded component of
$\C \setminus \overline{N^{S+1}(W)}$. Moreover, by Theorem \ref{absorbing} we have
\[
N(\bd \Omega') \subset N(\overline{N^S(W)}) = 
N^{S+1}(\overline{W}) \subset
N^S(W) \subset \C \setminus \overline{\Omega'}.
\]
This implies that $\overline{\Omega'}\subset \ext(N(\bd\Omega'))$. 
Consequently, the assumptions of Corollary~\ref{cor:mapout1} 
are satisfied for $X = \bd \Omega'$, and so $N$ has a weakly repelling fixed point in $\Omega'$, which is impossible. 

\subsubsection*{Case $($\rm{ii}$)$: $\sup\SS = \infty$}

Fix some point $z_0\in\C$, which is not a pole of
$N$. By assumption and Theorem \ref{absorbing}, for sufficiently large $n$ there
exists a bounded component $\Omega''$ of $\C \setminus \overline{N^n(W)}$
containing $p, z_0, N(z_0)$, such that
\[
N(\bd \Omega'') \subset N(\overline{N^n(W)}) = N^{n+1}(\overline{W})
\subset N^n(W) \subset \C \setminus \overline{\Omega''}. 
\]
Hence,
\[
\overline{\Omega''} \subset D,
\]
where $D$ is a component of $\clC \setminus N(\bd \Omega'')$. We have $z_0,
N(z_0) \in \Omega'' \subset D$. Hence, $\Omega'', D, z_0$ satisfy the
assumptions of Lemma~\ref{mapout} (a), from which we conclude that $N$ has a weakly repelling fixed point in $\Omega''$, a contradiction.
\end{proof}

We are now ready to prove Theorem \ref{invbasins}.
\begin{proof}[Proof of Theorem \ref{invbasins}]
In view of Propositions \ref{nohermans} and \ref{scabsorbing}, we may assume that the  invariant Fatou component $U$ is a (super)attracting immediate basin, or a Baker domain with a simply connected absorbing set or a parabolic immediate basin of the point at infinity (only for rational Newton maps). In each of these  three cases, there is a simply connected absorbing set included in $U$  (see  Remark \ref{rem:scabsorbing}), and hence the iterates of any closed curve are eventually contractible.
 
We assume that $U$ is multiply connected. Under this assumption,  Lemma \ref{lem:poles-in-holes} provides a simple closed curve   $\gamma' \subset U$ so that $\inter(\gamma')$ contains a pole of $N$, say $p$. Consider the set 

$$
\Gamma':=
\bigcup_{n\geq 0} N^n(\gamma').
$$

Clearly $\Gamma'$ is  forward invariant, i.e., $N\left(\Gamma'\right)\subset \Gamma'$. 
 Notice that $p\notin \Gamma'$.

Iterates of $\gamma'$ must be eventually contractible. Hence there exists $n_0>0$ such that $p\in K(N^{n_0}(\Gamma'))$, but $p\notin K(N^{n}(\Gamma'))$ for all $n>n_0$. Set 
\[
\Gamma:=N^{n_0}(\Gamma').
\] 
Note that $\Gamma$ is a closed set in $\C$ except in some special cases where $\overline{\Gamma}\setminus \Gamma$ (where the closure is taken in $\C$) may consist of an attracting fixed point.

Let $\Omega'$  be the connected component of $\C\setminus \overline{\Gamma}$ containing $p$   and let
\[
 \Omega= \bigcup \{K(\sigma)\mid \sigma \text{ is a closed curve in $\Omega'$} \}.
 \]
By definition, $\Omega$ is a bounded simply connected domain in $\C$  containing $p$ and such that 
\[
\partial \Omega \subset \partial \Omega' \subset \overline{\Gamma} \subset U.
\]

Since $N(\Gamma)\subset\Gamma$,  one of the following must be satisfied:
\[
N(\partial \Omega) \cap \Omega =\emptyset \quad {\rm or} \quad N(\partial \Omega) \subset \Omega.
\]
Hence, using that no iterate of $\Gamma$ can surround the pole $p$, we have to consider the following two cases:
\begin{equation} \label{twocases}
\Omega \subset \ext(N(\bd \Omega)) \quad {\rm or} \quad N(\partial \Omega) \subset \Omega.
\end{equation}

\subsubsection*{Case $(i)$: $\Omega \subset \ext(N(\bd \Omega))$ } \ \\
Let us first assume that  $\overline{\Omega} \subset \ext(N(\bd \Omega)) $.  Then  we are under the hypotheses of Corollary \ref{cor:mapout1} with $X=\bd \Omega$, which provides a weakly repelling fixed point of $N$ in $\Omega$, which is impossible. 

If  $\Omega \subset \ext(N(\bd \Omega))$ but $\bd \Omega$ intersects its image,  we must again distinguish between two possibilities. Suppose that  $\partial \Omega$ contains no fixed point of $N$ (see Figure \ref{fig:sketch1} (a)). Then $\partial \Omega$  is a finite union of arcs contained in a finite number of iterates of the original curve $\gamma'$, and hence it is locally connected. It follows that  we are under the hypothesis of Lemma \ref{mapout} (a') with $D=\ext (N(\bd \Omega))$ and $z_0=p$ and, hence, there is a weakly repelling fixed point in $\Omega$, again a contradiction. 

We are left with the situation where $\Omega \subset \ext(N(\bd \Omega))$ and $\partial \Omega$ contains a fixed point $\zeta$ of $N$ (see Figure \ref{fig:sketch1} (b)).  This implies that $U$ is an attracting basin of an attracting fixed point  $\zeta$. In this case we must proceed in a slightly different way. 

We first observe that $\zeta$ is the only fixed point in $\bd \Omega$, given that $\partial \Omega\subset U$. Let $\Delta$ be a small topological disk containing  $\zeta$ such that $N(\overline{\Delta}) \subset \Delta$ (it exists because  $\zeta$ is attracting). Let $\Omegat:=\Omega \setminus \Delta$. By construction, $\partial \Omegat$ is  connected and therefore $\Omegat$ is simply connected. Moreover, $\partial \Omegat$ is locally connected since iterates of $\Gamma$ must eventually enter $\Delta$, hence, as above, $\partial \Omegat$ consists of finitely many arcs. Finally, since $\zeta\notin\partial \Omegat$, we are under the hypothesis of Lemma \ref{mapout} (a'), which again gives a contradiction.

\subsubsection*{Case $(ii)$: $N(\partial \Omega) \subset \Omega$} (See Figure \ref{fig:sketch1} (c)) \ \\
In this case the assumptions of Proposition \ref{prop:index} are satisfied. Indeed, $\bd \Omega$  is locally connected since, as in the previous cases, it consists of finitely many arcs (notice that $\bd \Omega$ cannot contain any fixed point in this case, and hence it is disjoint from the appropriate absorbing set). We  conclude from Proposition \ref{prop:index} that $\Omega$ contains at least two fixed points, since $\Omega$ contains at least one pole. One of them may possibly be the attracting fixed point in $U$ (if $U$ happened to be an attracting basin), but the second one  belongs to a different attracting basin, say  $U'$.  However $U' \subset \Omega$ and hence it is bounded, a contradiction with Proposition \ref{prop:unbounded_attracting_basins}.
  
We conclude that $U$ is simply connected and the proof is finished.
 \end{proof}

\enlargethispage{0.5cm}

\begin{figure}[htp!] 
\centering
\includegraphics[width=0.32\textwidth]{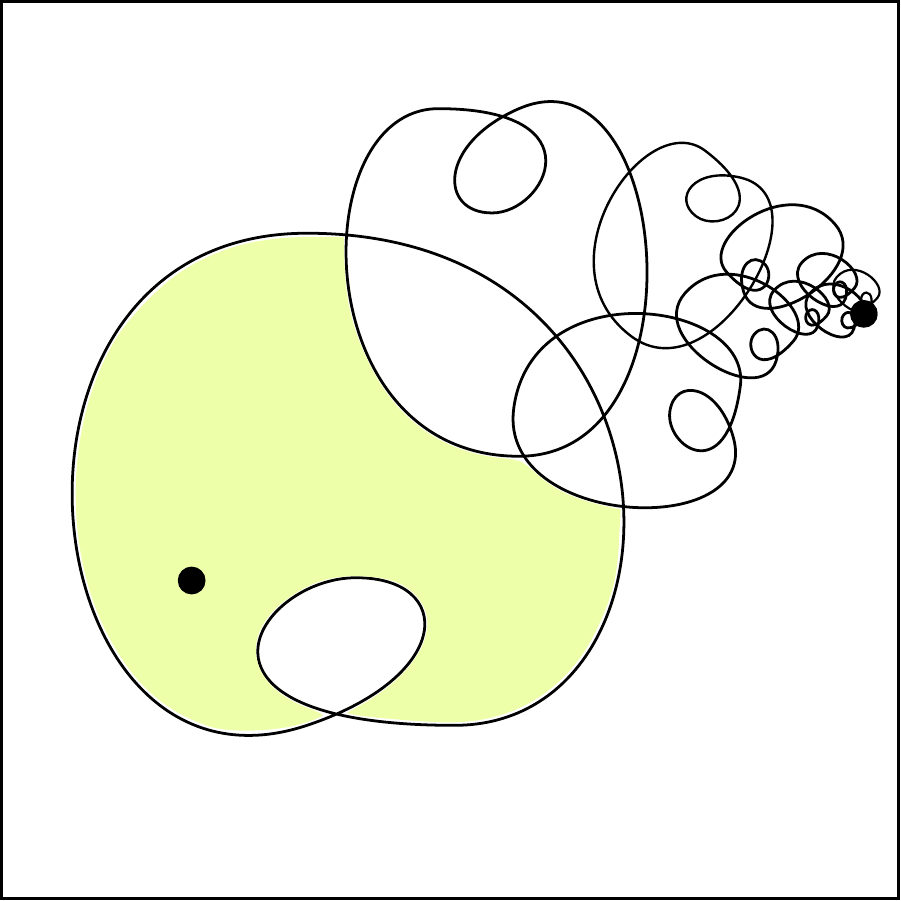} \hfill
\includegraphics[width=0.32\textwidth]{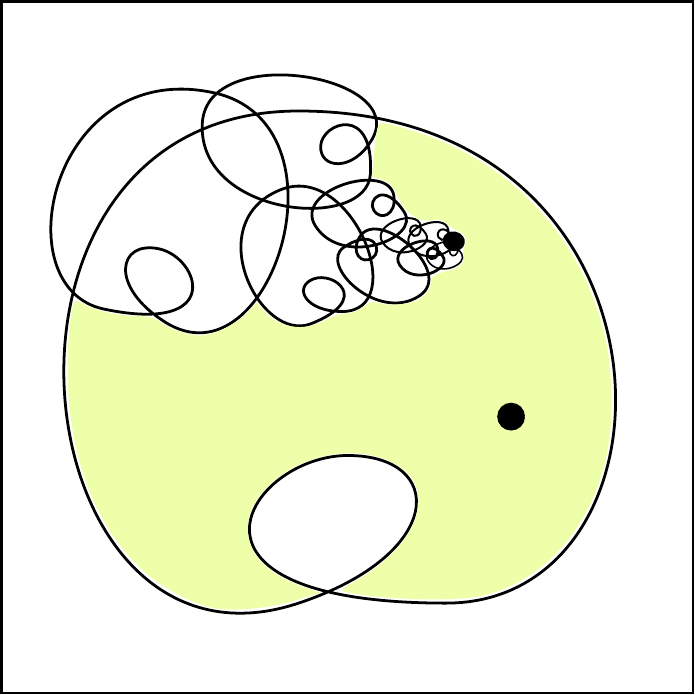} \hfill
\includegraphics[width=0.32\textwidth]{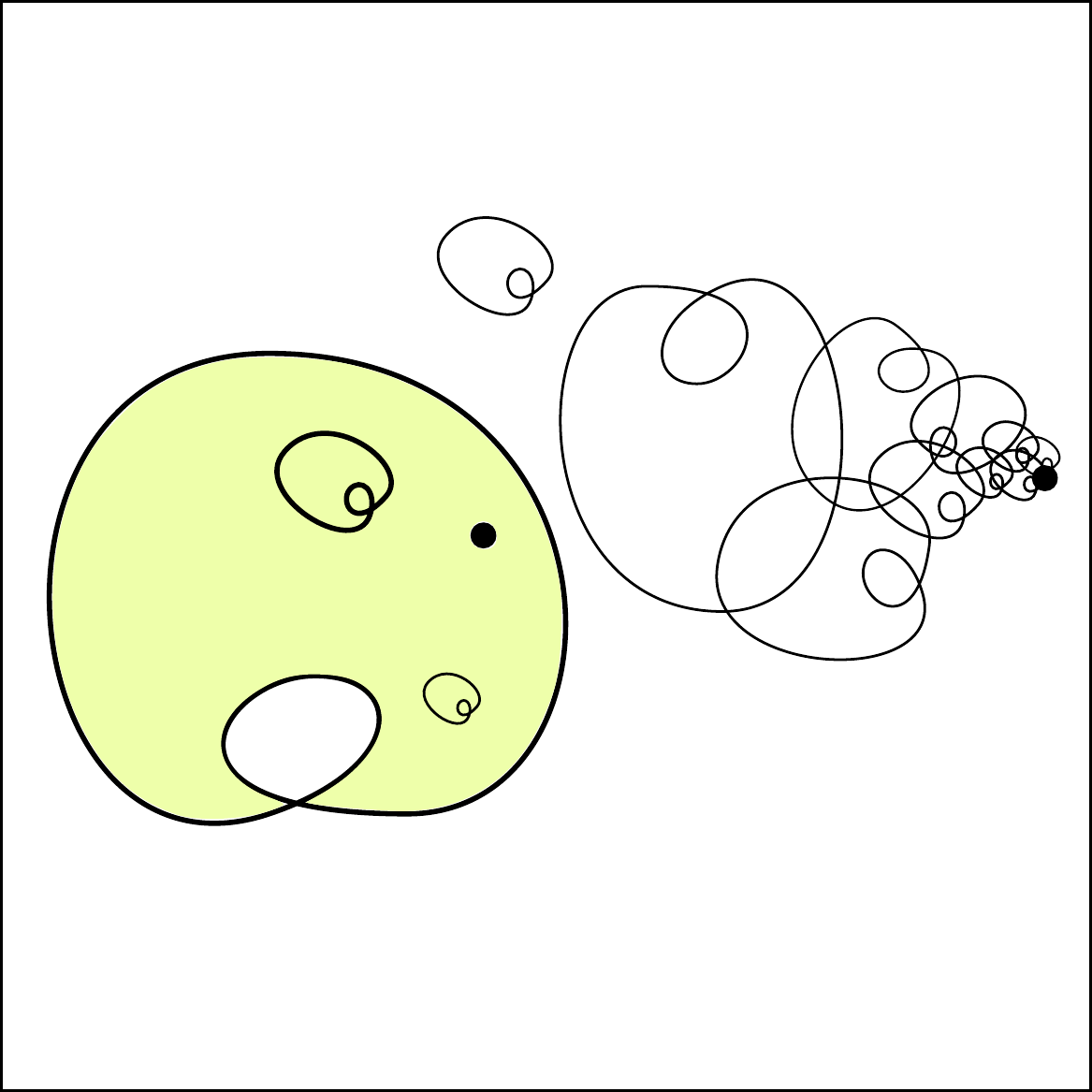} 
\setlength{\unitlength}{\textwidth}
\put(-0.85,0.015){(a)}
\put(-0.51,0.015){(b)}
\put(-0.17,0.015){(c)}
\put(-0.92,0.11){$p$}
\put(-0.92,0.18){\Large $\Omega$}
\put(-0.89,0.27){\large $\Gamma$}
\put(-0.41,0.125){$p$}
\put(-0.6,0.125){\Large $\Omega$}
\put(-0.5,0.29){\large $\Gamma$}
\put(-0.185,0.145){\scriptsize$p$}
\put(-0.28,0.14){\large $\Omega$}
\put(-0.17,0.26){\large $\Gamma$}
\put(-0.7,0.18){$\eta$}
\put(-0.44,0.2){$\zeta$}
\put(-0.02,0.15){$\eta$}
\caption{\small Sketch of possible setups in the proof of Theorem  \ref{invbasins}.  In case (b), $\zeta$ is an attracting fixed point. In cases (a) and (c),  $\eta$ is either an attracting fixed point or the point at infinity.}
\label{fig:sketch1}
\end{figure}

\subsection{Periodic Fatou components of period $p>1$, preperiodic components  and wandering domains}

\ 

Our goal in this section is to prove the following theorem. 
\begin{thm}[\bf (Pre)periodic and wandering Fatou components]\label{prop:periodic_wandering}
\label{thm:second}
Let $N$ be a Newton map and let $U$ be either a periodic Fatou component of minimal period $p>1$, a preperiodic component or a wandering domain. Then $U$ is simply connected. In particular $U$ cannot be a $p$-periodic Herman ring.  
\end{thm}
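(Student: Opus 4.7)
The plan is to reduce Theorem~\ref{thm:second} to the scheme of the proof of Theorem~\ref{invbasins} by replacing the forward invariance of $U$ with the forward invariance of a carefully chosen orbit of a curve. Suppose $U$ is multiply connected. Choose a closed curve $\gamma_0 \subset U$ with $K(\gamma_0) \cap \jul(N) \ne \emptyset$. By Lemma~\ref{lem:poles-in-holes} there is $k \ge 0$ such that $K(N^k(\gamma_0))$ contains a pole $q$ of $N$; replacing $U$ and $\gamma_0$ by their images under $N^k$ (which preserves the type of $U$: periodic of the same minimal period, preperiodic, or wandering) we may assume $q \in K(\gamma_0)$. The orbit $\Gamma' := \bigcup_{n \ge 0} N^n(\gamma_0)$ satisfies $N(\Gamma') \subset \Gamma'$, providing an $N$-forward-invariant ``skeleton'' on which to run the argument of Theorem~\ref{invbasins}.

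The key construction is to select a closed forward-invariant $\Gamma \subset \overline{\Gamma'}$ for which the component $\Omega'$ of $\C \setminus \overline{\Gamma}$ containing $q$ has the property that its hull $\Omega := \bigcup\{K(\sigma) : \sigma \subset \Omega' \text{ a closed curve}\}$ is a bounded simply connected domain with $\partial \Omega \subset \overline{\Gamma}$ and $q$ is not surrounded by any further iterate of $\partial \Omega$ under $N$. When $U$'s eventual cycle is a (super)attracting basin, a parabolic basin, or a Baker cycle of period $p$, this works exactly as in the invariant case: Remark~\ref{rem:scabsorbing} together with the natural analogue of Proposition~\ref{scabsorbing} for $F := N^p$ on the periodic component provides a simply connected absorbing set, so $N^{pn}(\gamma_0)$ is eventually contractible in it and one takes $n_0$ to be the largest $n$ with $q \in K(N^n(\gamma_0))$ and $\Gamma := N^{n_0}(\Gamma')$. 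The $p$-periodic Herman ring case, which subsumes the ``In particular'' assertion of the theorem, is treated separately by choosing $\gamma_0$ homotopic in $U$ to an $N^p$-invariant Jordan curve, so that the $N$-orbit of $\gamma_0$ accumulates on $p$ invariant Jordan curves forming a cycle under $N$, and $\Gamma$ is built from these accumulation curves. For wandering $U$, one invokes that the forward orbit of any compact set in a wandering Fatou component of a Newton map escapes to $\infty$, again giving a finite $n_0$. Once $\Gamma$ is in place, the case analysis of Theorem~\ref{invbasins} carries over essentially verbatim: $N(\Gamma) \subset \Gamma$ and the maximality of $n_0$ force either $\Omega \subset \ext(N(\partial \Omega))$ -- in which case $\partial \Omega$ is locally connected (a union of finitely many arcs of iterates of $\gamma_0$) and Corollary~\ref{cor:mapout1} or Lemma~\ref{mapout}(a$'$)/(a$''$) produces a weakly repelling fixed point of $N$ in $\Omega$, contradicting that Newton maps have no such points -- or $N(\partial \Omega) \subset \Omega$, in which case Proposition~\ref{prop:index} supplies at least two attracting fixed points of $N$ inside $\Omega$, whose invariant immediate basins are distinct from the period-$>1$ (respectively preperiodic or wandering) components making up $\partial \Omega$; being connected and meeting the bounded set $\Omega$, each such basin must lie in $\Omega$, contradicting Proposition~\ref{prop:unbounded_attracting_basins}.

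The main conceptual obstacle I anticipate is the $p$-periodic Herman ring case, where the absorbing-set approach fails entirely: one must instead exploit the rotational dynamics on $N^p$-invariant curves to build a usable $\Gamma$ and verify by hand that the resulting $\Omega$ satisfies the geometric hypotheses of the fixed-point lemmas. A secondary and more technical difficulty is the wandering case, which requires a separate escaping-set argument specific to Newton maps to guarantee that the orbit of $\gamma_0$ eventually stops surrounding $q$, so that $n_0$ is well defined and the Theorem~\ref{invbasins} scheme can be applied.
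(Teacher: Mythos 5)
Your proposal misses the single observation that makes this case far easier than the invariant one, and as a result it wanders into genuine gaps. After applying Lemma~\ref{lem:poles-in-holes} to get a simple closed curve $\gamma\subset N^n(U)$ surrounding a pole $p$, note that the Fatou component $V\supset N^n(U)$ is multiply connected (since $K(\gamma)$ meets the Julia set), hence by Theorem~\ref{invbasins} it is \emph{not} invariant, hence $N(V)$ and $V$ are distinct Fatou components and so $N(\gamma)\cap\gamma=\emptyset$ automatically. This disjointness gives you the trichotomy $\Omega\subset\ext(N(\gamma))$, or $N(\gamma)\subset\Omega$, or $\gamma\subset K(N(\gamma))$ with no need for absorbing sets, forward-invariant skeletons $\Gamma$, contractibility of iterates, or any knowledge of the eventual fate of $U$. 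The first two cases are killed by Corollary~\ref{cor:mapout1} and Proposition~\ref{prop:index} (plus Proposition~\ref{prop:unbounded_attracting_basins}) respectively; in the third case $N(\gamma)$ again surrounds $p$, so $N(V)$ is likewise non-invariant, $N^2(\gamma)\cap N(\gamma)=\emptyset$, and the two remaining relative positions are handled by Corollary~\ref{cor:mapout2} and by Proposition~\ref{prop:index} applied to the component of $\C\setminus N(\gamma)$ containing $N^2(\gamma)$. This treats periodic components of period $p>1$ (including Herman rings), preperiodic components and wandering domains completely uniformly.

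By contrast, the two cases you single out as the main obstacles are exactly where your argument breaks down. For wandering domains you invoke the claim that the forward orbit of any compact subset escapes to infinity; this is not established anywhere in the paper and is not a known general fact for wandering domains of meromorphic maps (limit functions need not be $\infty$), so your $n_0$ need not exist. For $p$-periodic Herman rings your plan to build $\Gamma$ from accumulation curves of the orbit of $\gamma_0$ is not carried out: it is unclear that the resulting $\Omega$ satisfies the hypotheses of any of the fixed-point lemmas, and the absorbing-set dichotomy (``maximality of $n_0$'') that drives the invariant-case argument has no analogue here. Both difficulties evaporate once you use the disjointness of $\gamma$ from its image; the transplanted $\Gamma$-machinery is not only unnecessary but is precisely what creates the gaps.
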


\begin{proof}
Assume that $U$ is multiply connected. By Lemma \ref{lem:poles-in-holes}, there
exist $n\geq 0$ and  a  simple closed curve $\gamma\subset N^n(U)$ surrounding a pole $p$ of $N$. Let $V$ be the Fatou component containing $N^n(U)$.  We denote by $\Omega$ the bounded connected component of $\mathbb C\setminus \gamma$ (i.e., $\gamma=\partial \Omega$).

Observe that $V$ cannot be invariant, since Theorem \ref{invbasins} ensures that invariant Fatou components are simply connected. Hence, $N(\gamma)\cap \gamma =\emptyset$ and there are three cases to be considered: 
\[
\Omega \subset \ext(N(\gamma)) \text{\ \ or\ \ }  N\left(\gamma \right) \subset \Omega \text{\ \ or\ \ }\gamma\subset
K\left(N\left(\gamma\right)\right).
\]

\begin{figure}[htp!] 
\centering
\includegraphics[width=0.32\textwidth]{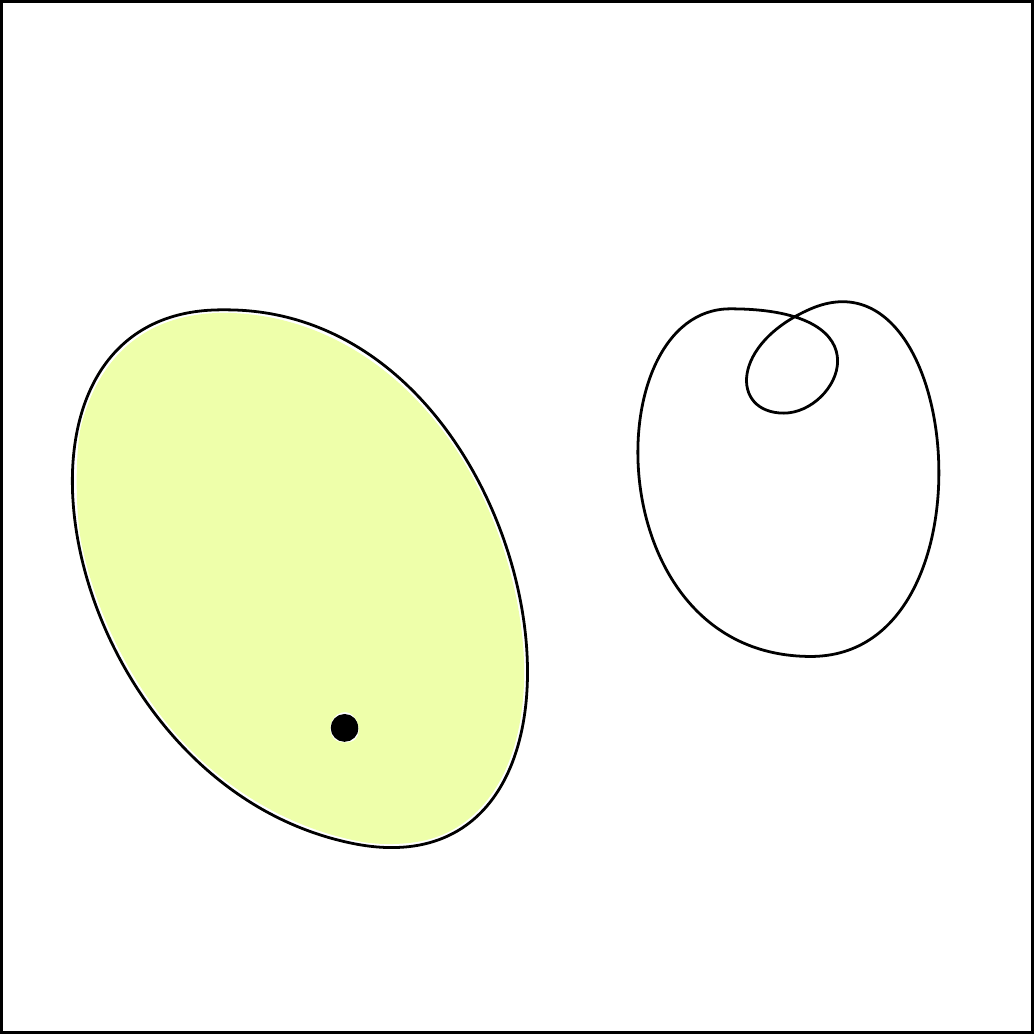} \hfill
\includegraphics[width=0.32\textwidth]{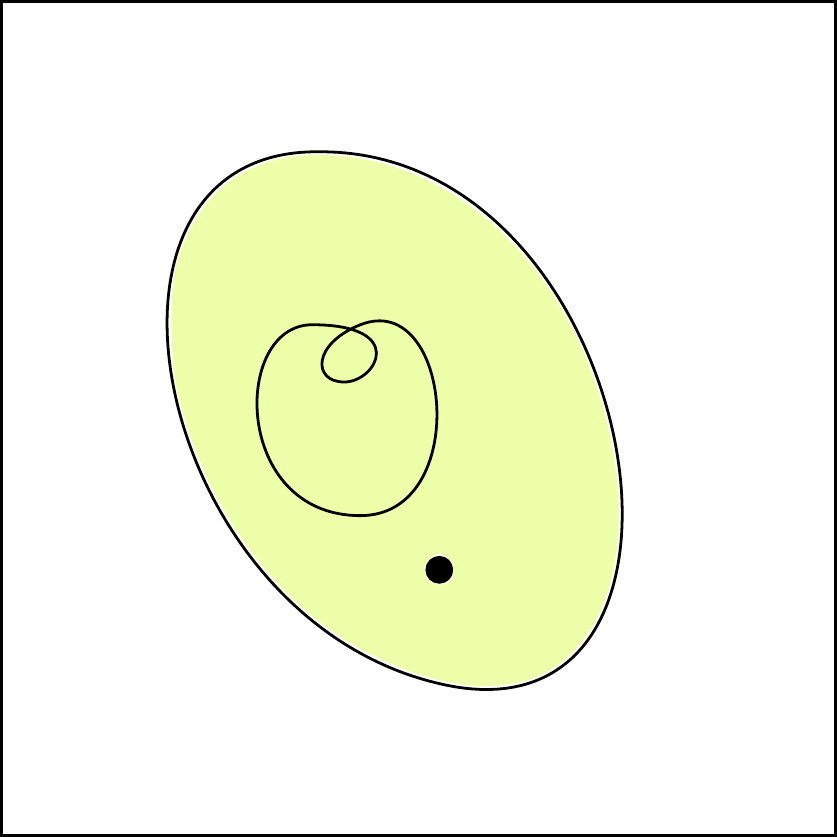} \hfill
\includegraphics[width=0.32\textwidth]{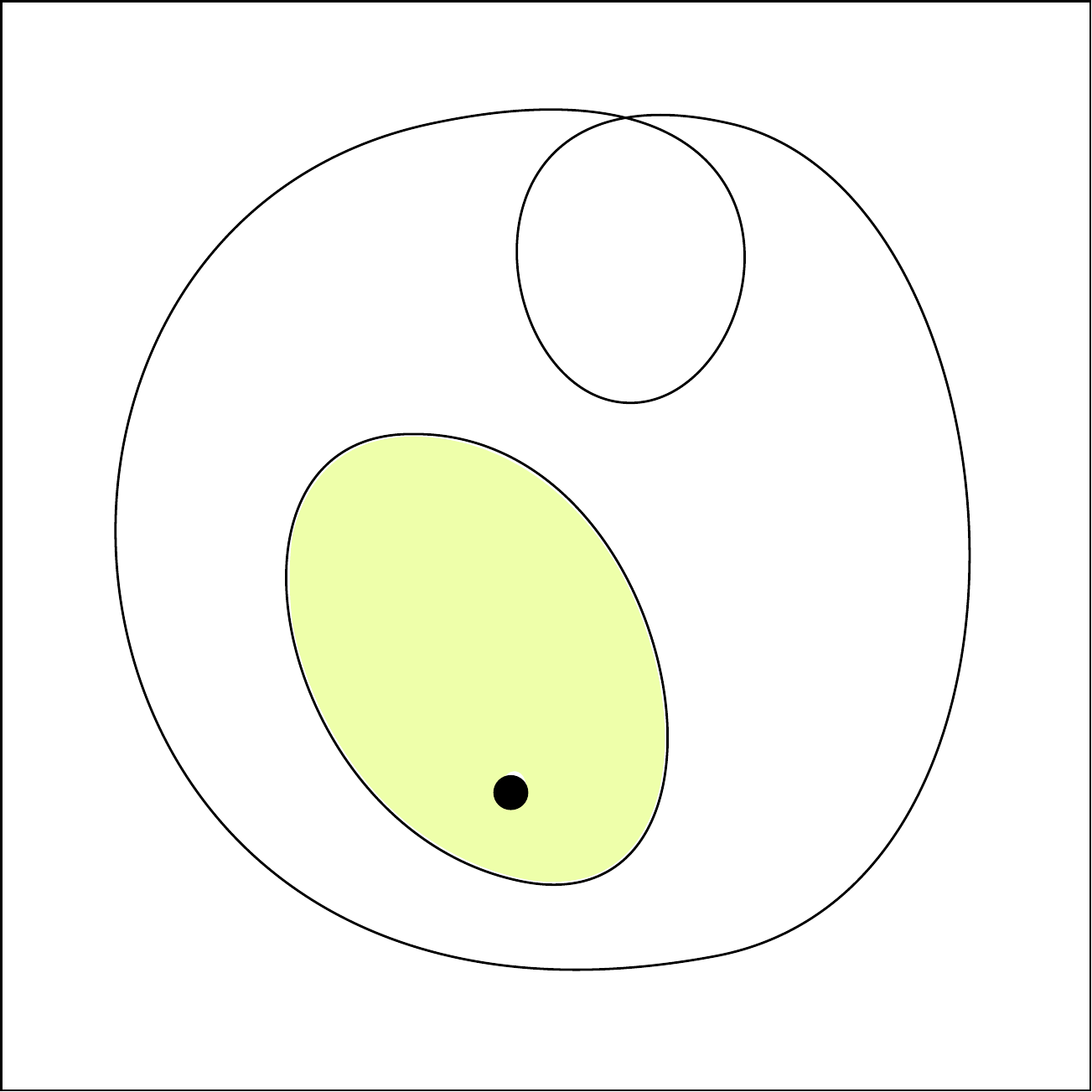} 
\setlength{\unitlength}{\textwidth}
\put(-0.85,0.015){(a)}
\put(-0.51,0.015){(b)}
\put(-0.17,0.015){(c)}
\put(-0.9,0.11){$p$}
\put(-0.96,0.185){\Large $\Omega$}
\put(-0.78,0.24){$N(\gamma)$}
\put(-0.9,0.23){$\gamma$}
\put(-0.5,0.082){$p$}
\put(-0.57,0.22){\Large $\Omega$}
\put(-0.48,0.25){$\gamma$}
\put(-0.484,0.15){\small $N(\gamma)$}
\put(-0.185,0.1){$p$}
\put(-0.13,0.15){$\gamma$}
\put(-0.22,0.15){\Large $\Omega$}
\put(-0.07,0.27){$N(\gamma)$}
\caption{\small Sketch of possible setups in the proof of Theorem  \ref{thm:second}.}
\label{fig:sketch2}
\end{figure}

In the first case (see Figure \ref{fig:sketch2} (a)), Corollary \ref{cor:mapout1} with $X=\gamma$   implies that $N$ has a weakly repelling fixed point in $\Omega$, a contradiction. In the second case (see Figure \ref{fig:sketch2} (b)), Proposition \ref{prop:index} implies that $N$ has at least two fixed points in $\Omega$. Since $N$ is a  Newton map, these two fixed points in $\Omega$ are attracting. Their corresponding immediate basins are in the interior of $\gamma$ and hence they are bounded, which contradicts Proposition \ref{prop:unbounded_attracting_basins}. 

Now consider the remaining case (see Figure \ref{fig:sketch2} (c))
\[
\gamma\subset K\left(N\left(\gamma\right)\right).
\]
 
Notice that $N(\gamma) \subset N(V)$ also surrounds the pole $p$ and hence $N(V)$ is not forward invariant either. It follows that  $N^2(\gamma) \cap N(\gamma) =\emptyset$ 

There are two possible relative position between $N^2(\gamma)$ and $N(\gamma)$.
\[
 N^2(\gamma) \subset \ext(N(\gamma)) \text{\ \ or \ \ } 
 N^2(\gamma) \subset K(N(\gamma)).
 \]
 In the first case Corollary \ref{cor:mapout2} implies there is a weakly repelling fixed point in $K\left(N\left(\gamma\right)\right)$, a contradiction. In the second case, since $N^2(\gamma)$ and $N(\gamma)$ are disjoint, it follows that $N^2(\gamma)$ is contained in a component $\Omega'$ of $\C\setminus N(\gamma)$. 
 Notice that $\Omega'$ is bounded and simply connected. Since $\partial \Omega' \subset N(\gamma)$ and $N(\bd \Omega') \subset N^2(\gamma)$, we have that $N(\bd \Omega') \subset \Omega'$. We are then under the hypothesis of Proposition \ref{prop:index}, from which we conclude that $\Omega'$ contains a fixed point. This fixed point must be attracting and its basin is contained in $\Omega'$ because $\partial \Omega' \subset N(V)$ and $N(V)$ is not  invariant. But this is a contradiction since $N$ has no bounded attracting basins by Proposition  \ref{prop:unbounded_attracting_basins}.
\end{proof}

\bibliography{unified}

\end{document}